\numberwithin{equation}{section}
\newtheorem{thm}{Theorem}[section]
\newtheorem{lem}[thm]{Lemma} 
\newtheorem{prop}[thm]{Proposition} 
\newtheorem{lemma}[thm]{Lemma}
\theoremstyle{definition}
\newtheorem{defin}[thm]{Definition} 
\newtheorem{rem}[thm]{Remark}
\newcommand{\textR}[1]{\textcolor{red}{#1}}
\newcommand{\A}{\mathcal{A}}
\newcommand{\K}{\mathcal{K}}
\newcommand{\Gcal}{\mathcal{G}}
\newcommand{\R}{\mathbb{R}}
\def\qed{\hfill $\Box$}
\def\bv{\mbox{\boldmath $v$}}
\def\b0{\mbox{\boldmath $0$}}
\newcommand{\addresslist}
{
\begin{tabular}{l}
(M. Hasegawa)\\
Department of Information Science,\\
Center for Liberal Arts and Sciences,\\
Iwate Medical University, \\
2-1-1 Idaidori, Yahaba-cho, Shiwa-gun, Iwate,\\
028-3694, Japan\\
E-mail: {\tt mhaseO\!\!\!aiwate-med.ac.jp}\\[3mm]
(Y. Kabata)\\
School of Information and Data Sciences,\\
Nagasaki University, \\
Bunkyocho 1-14, Nagasaki, 852-8131, Japan\\
{\tt kabata@nagasaki-u.ac.jp}\\[3mm]
(K. Saji)\\
Department of Mathematics,\\
Graduate School of Science, \\
Kobe University, \\
Rokkodai 1-1, Nada, Kobe, 657-8501, Japan\\
{\tt saji@math.kobe-u.ac.jp}
\end{tabular}
}
\begin{document}
\title{Contact cylindrical surfaces and a projection
of a surface around a parabolic point}
\author{Masaru Hasegawa, Yutaro Kabata and Kentaro Saji}
\date{\today}
\maketitle
\begin{abstract}
We investigate differential geometric
properties of a parabolic point of a surface
in the Euclidean three space.
We introduce the contact cylindrical surface which
is a cylindrical surface having a degenerate contact type with the original surface at a parabolic point.
Furthermore, 
we show that such a contact property gives a characterization
to the $\A$-singularity of the orthogonal projection of a surface from the asymptotic direction.
\end{abstract}
\providecommand{\mathsubcl}[1]
{
  \small	
  \textbf{{2020 Mathematics Subject Classification---}} #1
}

\providecommand{\keywords}[1]
{
  \small	
  \textbf{{Keywords--- }} #1
}

\mathsubcl{
Primary: 57R45, 
Secondary: 53A05,  
58Kxx  
}

\

\keywords{contact cylindrical surfaces, parabolic points, cusps of Gauss mappings, singularities of projections}


\section{Introduction}
The general theory to study the contact between two manifolds from the viewpoint of the singularity theory was established by Mather and Montaldi (cf. \cite[Chapter 4]{IRFT}). 
Thanks to this theory,
we can investigate local differential geometry of a surface with respect to the contacts with model manifolds.
Planes, spheres, and cylinders are only three types of homogeneous surfaces in $\R^3$, and have been used as the model manifolds to investigate differential geometry of a surface in  $\R^3$ \cite{FHN, IRFT, Montaldi1986}.
Recently, the contacts of a surface with non-homogeneous surfaces (such as cones or parabolas) are considered
in order to investigate new differential geometric features of a surface \cite{shuheihonda, MN}.
As a new candidate for a model surface, the present paper proposes a cylindrical surface,
which is non-homogeneous and useful to analyze the detailed geometry of a surface around a parabolic point. 

In local differential geometry of a surface in the Euclidean three space $\R^3$,
the Gaussian curvature and its zero set are one of the central objects.
A point where the Gaussian curvature vanishes is usually called
a {\it parabolic point}, and its collection is called a {\it parabolic set}.
In this paper, in order to study local differential geometry of a surface in $\R^3$,
we introduce the notion of a {\it contact cylindrical surface},
which is a cylindrical surface having a very degenerate contact with a surface at a parabolic point.
Considering contact cylindrical surfaces,
we get a new stratification of a jet space of parabolic surface germs as in Theorem \ref{thm:main}.
We emphasize that
the notion of contact cylindrical surface can give 
new differential geometrical invariants of surfaces at parabolic points,
as the contact spheres of a surface gives the principal curvatures of the surface (see Remark \ref{reminv}).
Furthermore, the notion of contact cylindrical surface gives
a suitable characterization to $\A$-singularities
of orthogonal projections, as shown below.

In addition to the Gaussian curvature,
several geometric functions or maps on a surface 
show interesting behaviors around a parabolic point.
Generically, 
the Gauss map has fold singularities
at almost all parabolic points,
and cusp singularities at discrete points on a parabolic set.
The later points are called {\it cusps of Gauss},
and have been investigated in many literatures 
(cf. \cite{cuspgauss, BT2019, FH2012, FHN, IRFT, kabata}).
Such points are also characterized by singularities of height functions,
and more detailed features of parabolic points can be given
by studying singularities of projections of surfaces.

The classification of singularities appearing 
in the orthogonal projections of surfaces
has been well studied in many literatures 
(cf. \cite{cuspgauss, BT2019, shuheihonda, IRFT, kabata, whitney})
in the context of $\A$-equivalence.
On the other hand,
it is still not clear
how such singularities of the orthogonal projections are characterized
in other geometrical terms.
For example, 
for a generic surface,
a {\it gulls singularity} of the orthogonal projection
corresponds to a cusp of Gauss
\cite{cuspgauss,  BT2019}.
However, this is not exact in general.
Especially,
whether the orthogonal projection
has a gulls singularity is determined by the $5$-jet of the surface-germ;
while whether the point is a cusp of Gauss is determined by just the
$4$-jet of the surface-germ.
 
The difficulty of characterizing singularities of orthogonal 
projections by other geometry
is due to the complexity of the classification of map-germs by the $\A$-equivalence
as seen in \cite{Bruce1984, Gaffney1983, IRFT, kabata}.
In this sense, the notion of cusp of Gauss is rather simple,
because it is characterized through geometry of height 
functions under $\K$ equivalence.
As mentioned above, studying surfaces with some model submanifolds 
(such as planes, spheres and cylinders)
through the $\K$ equivalence of the contact maps
is a strong tool of singularity theory \cite{FHN, shuheihonda, IRFT, MN, Montaldi1986}.
Our contact cylindrical surface plays an important role in characterizing $\A$-singularities of orthogonal 
projections
as in Table \ref{cor-comp-all}.
In particular,
all $\A$-singularities appearing in the orthogonal 
projections of projection generic surfaces at parabolic points
are completely characterized from the viewpoint of $\K$-singularities 
of contact functions (see Theorem \ref{characterizationthm} and Remark \ref{genericrem}).

The rest of the paper is organized as follows.
In \S2, we give preliminaries about basic notion of singularity theory
used in this paper.
In \S3, 
 we introduce the notion of a contact cylindrical surface,
 and give a stratification of the jet space of surface-germs at non-umbilical
parabolic points
 induced from it.
 In \S4,
 we give stratifications of
 the jet space of surface-germs at non-umbilical parabolic points
 induced from $\A$-singularities of orthogonal projections,
 and compare it with the stratification given in \S3.

Unless noted otherwise, we consider jets of map-germs and function-germs at the origin throughout this paper.

\section{Preliminaries}
In this section, we introduce basic notions of singularity theory of maps.
Let $(X_i,Y_i)$ $(i=1,2)$ be two pairs of 
submanifold germs at the origin of $\R^n$. 
The pairs are said to {\it have the same contact type} if there exists 
a diffeomorphism-germ $\phi:\R^n,0\to\R^n,0$ such that 
$\phi(X_1)=X_2$ and $\phi(Y_1)=Y_2$.
The following theorem shows that this contact
is measured by $\K$-equivalence.
Two map-germs $f_{i}:\R^2,0\to\R^p,0$ $(i=1,2)$ are {\it $\K$-equivalent\/} 
($f_1\sim_{\K} f_2$) if there exist a diffeomorphism-germ $\phi:\R^2,0\to\R^2,0$ 
and a matrix valued map $M:\R^2,0\to GL(p,\R)$ such that $M f_1=f_2\circ \phi.$
\begin{thm}[\cite{Montaldi1986}]
\label{thm:contact}
Let $h_i:X_i,x_i\to\R^n,0$ $(i=1,2)$ be immersion-germs and $g_i:\R^n,0\to\R^p,0$ be submersion-germs with $Y_i=g_i^{-1}(0)$. Then the pairs $(X_1,Y_1)$ and $(X_2,Y_2)$ have the same contact type if and only if $g_1 \circ h_1$ and $g_2 \circ h_2$ are $\mathcal K$-equivalent. 
\end{thm}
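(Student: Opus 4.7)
The plan is to prove the two implications separately, using the immersion and submersion normal forms together with a Hadamard-type matrix division lemma.

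For the forward direction (same contact type $\Rightarrow$ $\K$-equivalent), I would first transport the parameterizations. Because $\phi\circ h_1:X_1,x_1\to\R^n,0$ is an immersion-germ whose image equals $h_2(X_2)$, and $h_2$ is another immersion-germ onto the same submanifold-germ, the inverse function theorem applied to a local chart of $h_2(X_2)$ produces a unique diffeomorphism-germ $\psi:\R^2,0\to\R^2,0$ with $h_2\circ\psi = \phi\circ h_1$. For the matrix factor, I would invoke a submersion division lemma: if $g,\tilde g:\R^n,0\to\R^p,0$ are submersion-germs with the same zero set, then there exists a smooth $M':\R^n,0\to GL(p,\R)$ with $\tilde g = M'\cdot g$. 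To see this, choose coordinates on $\R^n$ in which $g$ is the standard projection onto the last $p$ coordinates; Hadamard's lemma expresses the components of $\tilde g$ as linear combinations of those coordinates, and $\tilde g$ being a submersion forces the coefficient matrix to be invertible at the origin. Applying this to $g_1$ and $\tilde g = g_2\circ\phi$ (whose common zero set is $Y_1$) yields $M'$, and then $M:=M'\circ h_1$ satisfies $M\cdot(g_1\circ h_1)=(g_2\circ h_2)\circ\psi$, giving the $\K$-equivalence.

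For the reverse direction I would reduce to standard forms and build $\phi$ by hand. Applying the immersion theorem independently to $h_1$ and $h_2$, I can assume coordinates on $\R^n$ are chosen so that each $h_i$ is the inclusion $x\mapsto(x,0)\in\R^2\times\R^{n-2}$; in particular $h_1(X_1)=h_2(X_2)=\R^2\times\{0\}$ as subsets. The $\K$-equivalence data then reads $M(x)\cdot g_1(x,0)=g_2(\psi(x),0)$. The candidate ambient map is
\[
\phi(x,y)=\bigl(\psi(x)+y\,\alpha(x,y),\;A(x,y)\,y\bigr),
\]
with $A$ a matrix-valued germ and $\alpha$ a correction term. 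Such a $\phi$ automatically sends $\R^2\times\{0\}$ into itself (mapping $h_1(X_1)$ onto $h_2(X_2)$), and is a diffeomorphism-germ as long as $A(0)$ is invertible. What remains is to choose $A$ (and, if necessary, $\alpha$) so that $\phi(Y_1)=Y_2$, equivalently so that $g_2\circ\phi$ and $g_1$ generate the same ideal.

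The main obstacle is precisely this last condition: extending the ideal identity $g_2(\psi(x),0)=M(x)\cdot g_1(x,0)$ from $X_1=\{y=0\}$ to a whole neighborhood. My plan is to first absorb $M$ by replacing $g_2$ with $M^{-1}\cdot g_2$, reducing to the case $M\equiv I$. Then I would put $g_1$ into the submersion normal form $g_1(x,y)=(y_1,\dots,y_p,0,\dots)$ after an additional coordinate change on $\R^n$ fixing $\R^2\times\{0\}$; applying Hadamard's lemma to each component of $g_2\circ\phi$ in the transverse variables $y$ expresses $g_2\circ\phi$ as a matrix multiple of $g_1$, and choosing $A$ so that this matrix equals the identity at the origin gives the required invertibility. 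The whole construction is essentially the infinitesimal inverse of the submersion division lemma used in the forward direction, so the expected technical work lies in verifying that the Hadamard expansion can be carried out compatibly with the constraint $\phi|_{y=0}=(\psi,0)$; once this is done, the proof is complete by appealing to Theorem~\ref{thm:contact}'s own statement applied in the straightened coordinates.
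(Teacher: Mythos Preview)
The paper does not prove this theorem: it is quoted from Montaldi \cite{Montaldi1986} as a known result and then used as a black box in \S3. There is therefore no ``paper's own proof'' to compare against; what follows are comments on your proposal on its own merits.

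Your forward direction is sound. The submersion division lemma you state is correct (Hadamard's lemma after putting $g$ into projection form), and pulling the resulting matrix identity back along $h_1$ gives exactly the required $\K$-equivalence $M\cdot(g_1\circ h_1)=(g_2\circ h_2)\circ\psi$.

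Your reverse direction has a genuine gap. After straightening each $h_i$ to the inclusion $x\mapsto(x,0)$, you propose to further put $g_1$ into the normal form $g_1(x,y)=(y_1,\dots,y_p)$ by a coordinate change on $\R^n$ that fixes $\R^2\times\{0\}$. But that normal form forces $g_1\circ h_1=g_1(x,0)\equiv0$, i.e.\ $X_1\subset Y_1$; this is exactly the situation one is \emph{not} in---the whole point of the paper is that $g_1\circ h_1$ has an isolated $A_k$, $D_k$ or $E_k$ singularity, not the zero germ. More generally, a coordinate change preserving $\R^2\times\{0\}$ setwise can alter $g_1|_{\R^2\times\{0\}}$ only by a reparametrization in the $x$-variables, so it can never remove the $x$-dependence of $g_1$. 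Consequently the Hadamard expansion ``in the transverse variables $y$'' that you want to set up is not available, and the construction of $\phi$ stalls at precisely the step you yourself flag as ``the main obstacle''. (Montaldi's original argument sidesteps this by using Mather's characterization of $\K$-equivalence in terms of the induced local algebras / pullback ideals, rather than attempting an explicit simultaneous normal form for $h_1$ and $g_1$.) Finally, your closing sentence appeals to ``Theorem~\ref{thm:contact}'s own statement'' to finish the argument, which as written is circular.
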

With this theorem, we investigate the surface germ $S$ in terms of the contact
with cylindrical surfaces, using $\K$-equivalence. 
Two map-germs $g_{i}:\R^2,0\to\R^2,0$ $(i=1,2)$ are {\it $\A$-equivalent\/} 
($g_1\sim_{\A} g_2$) if there exist diffeomorphism-germs $\phi_i:\R^2,0\to\R^2,0$ 
$(i=1,2)$ such that $\phi_2\circ g_1=g_2\circ \phi_1.$

The $\A$-class (respectively, the ${\mathcal K}$-class) 
of a given germ 
is called its {\it $\A$-singularity} 
(respectively, ${\mathcal K}$-{\it singularity}).
In this paper, we deal with the
$\A$-singularities with $\A$-codimension $\le5$ whose sets of singular points are singular at $0$
as in Table \ref{Riegerlist}
(see \cite{Rieger1987} for detail).
If a map-germ $g:\R^2,0\to\R^2,0$ is
$\A$-equivalent to one of the germs in the ``Normal form" column in 
Table \ref{Riegerlist}, then $g$ is called the name in
the ``Class name'' column of the table.
\begin{table}[htbp]
\centering
\begin{tabular}{c| l | l }
$\A$-codimension  &Class name & Normal form\\
\hline
\hline
3&beaks($-$), lips($+$) &($x, y^3\pm x^2y)$\\
\hline
4&goose &$(x, y^3+ x^3y)$\\
&gulls& $(x,xy^2+y^4+y^5)$ \\
\hline
5&$\pm$-ugly goose& $(x,y^3 \pm x^4y)$ \\
&ugly gulls& $(x,xy^2+y^4+y^7)$\\
&$12$-singularity & $(x,xy^2+y^5+y^6)$\\
&$16^\pm$-singularity  & $(x,x^2y+y^4\pm y^5)$\\
\end{tabular}
\caption{$\A$-singularities of map-germs $\R^2,0 \to \R^2,0$.}
\label{Riegerlist}
\end{table}

Two $k$-jets $j^kg_{i}(0)$ of map-germs $g_{i}:\R^2,0\to\R^2,0$ $(i=1,2)$ are 
{\it $\Gcal^k$-equivalent\/} ($j^kg_1(0)\sim_{\Gcal^k} j^kg_2(0)$) 
if there exits an action $h \in\Gcal$
so that $j^k(h.g_1)(0)=j^kg_2(0)$, where $\Gcal=\A$ or $\K$.
The following $\A$ or $\K$-determinacy of singularities
allows us to handle each singularity by its finite jet.
\begin{defin}
A map-germ $g:\R^n,0\to\R^p,0$ is said to be $k$-$\Gcal$-\textit{determined} if any 
map-germ $h$ satisfying $j^k h(0)=j^k g(0)$ is $\Gcal$-equivalent to $g$, 
where $\Gcal=\A$ or $\K$. 
The minimum integer $k$ satisfying this property is 
called the \textit{degree of determinacy of} $g$.
\end{defin}
It is known that
each singularity in Table \ref{Riegerlist}
is $k$-$\A$-determined, where $k$ is the highest degree of
the monomials appearing in its normal form.
See \cite[Section 3.2]{Rieger1987}, for detail and proofs.

On the other hand, we also deal with simple
$\mathcal{K}$-singularities of functions with 
two variables, which are classified by Arnol'd (\cite{Arnold1978})
as in Table \ref{tab:ksimple}. 
Each singularity in Table \ref{tab:ksimple}
is $k$-$\K$-determined, where $k$ is the highest degree of
the monomials in its normal form.
\begin{table}[!htb]
\begin{center}
\begin{tabular}{l|l}
\hline
Class name & Normal form\\
\hline
\hline
$A_{k}^{\pm}$ $(k\geq1)$ & $x^2 \pm y^{k+1}$ \\[3pt]
$D_{k}^{\pm}$ $(k\geq4)$& $x^2 y \pm y^{k-1}$ \\[3pt]
$E_6$ & $x^3 + y^4$\\[3pt]
$E_7$ & $x^3 + x y^3$\\[3pt]
$E_8$ & $x^3 + y^5$\\
\hline
\end{tabular}
\end{center}
\caption{Simple $\K$-singularities of smooth functions.}
\label{tab:ksimple}
\end{table}

Unless noted otherwise, we consider jets of map-germs and function-germs
 at the origin,
namely, $j^r\phi=j^r\phi(0)$ for a map $\phi$.

\section{Contact with cylindrical surface along\\ asymptotic direction}

In this section, we consider the contact of a surface-germ $S$ in Euclidean three space at a non-umbilical parabolic point 
with a family of cylindrical surface-germs. 
We call such a surface germ $S$ a {\it parabolic surface-germ}.
Let $S^2=\{X\in\R^3\,|\,|X|=1\}$ be the unit sphere.
For $\bv\in S^2$, let $\bv^\perp$ be the orthogonal complement of $\bv$ in $\R^3$, 
and let $\Gamma \colon \R,0\to \bv^\perp$ be a smooth plane curve. 
The cylindrical-surface germ $CS_{\bv,\Gamma}$ along the cylindrical direction $\bv$ (a line along $\bv$ is called the generatrix) with the base curve $\Gamma$ (known as the directrix) is defined by
$$
CS_{\bv,\Gamma}\colon\R^2,0\to\R^3,0,\quad 
(x,y)\mapsto \Gamma(y)+x\bv.
$$
When we consider $S$ at its parabolic point, which is not umbilical,
without loss of generality, we may assume that $S$ is given by a Monge form 
\begin{equation}\label{monge}
z=f(x,y)=\frac{a_{02}}{2}y^2+\sum_{i+j\ge3}^k \frac{a_{ij}}{i!j!} x^iy^j +O(x,y)^{k+1}\quad (a_{02}\ne0),
\end{equation}
where $O(x,y)^n$  consists of the terms whose degrees are greater than or equal to $n$.

Let $\bv=(1,0,0)$ be an asymptotic direction of $S$ at the origin, 
and let $\Gamma\colon \R,0 \to \R^3, y\mapsto (0, y, \gamma(y))$ be a curve-germ in the $yz$-plane. We shall measure the contact of $S$ with $CS_{\bv,\Gamma}$ expressed by $z=\gamma(y)$ whose cylindrical direction coincides with the unique asymptotic direction of $S$ at the parabolic point.

\begin{rem}
Since a homogeneous surface in $\R^3$ is a plane, a sphere or a right circular cylinder,
the cylindrical surfaces are not homogeneous.
For this reason, our study on the contact of $S$ with $CS_{\bv,\Gamma}$ analyzes
the contact at a special point $\Gamma(0)=(0,0,0)$ coinciding with 
a parabolic point of $S$.
See \cite{FHN, IRFT, Montaldi1986} for the studies with respect to homogeneous cases.
\end{rem}

By Theorem \ref{thm:contact}, the contact of $S$ with $CS_{\bv,\Gamma}$ is measured by $\mathcal{K}$-singularities of the contact map defined by $F(x,y)=f(x,y)-\gamma(y)$. 
The map $\gamma(y)$ can be written as
\begin{equation}\label{eq:gamma}
\gamma(y) = \gamma_1 y+\dfrac12\gamma_2y^2+\dfrac16\gamma_3y^3+O(y)^4,
\end{equation}
where $O(y)^n$ consists of the terms whose degrees are greater than or equal to $n$ and $\gamma_i = d^{(i)}\gamma/dy^{(i)}(0)$, and thus the contact map can be expressed as
\begin{align}\nonumber
F(x,y) & = 
-\gamma_1 y+
\dfrac{a_{02}-\gamma_2}2 y^2 \\
\label{contactfcn}
&\quad + \left(\dfrac{a_{30}}6x^3 + \dfrac{a_{21}}2 x^2 y + \dfrac{a_{12}}2 x y^2 + \dfrac{a_{03} - \gamma_3}6 y^3\right) + O(x,y)^4. 
\end{align}
We use the following proposition to determine the $\K$-singularities of given function germs $g:\R^2,0 \to \R,0$.
\begin{prop}
\label{prop:coordinate}
\begin{enumerate}
\item
If $j^kg=y^2+\sum_{i+j=k}(a_{ij}/i!j!)x^iy^j$ for $k\ge3$. Then $j^k g\sim_{\K^k} y^2+(a_{k0}/k!)x^k$, and thus $g$ has an $A_{k-1}^{\pm}$-singularity when $a_{k0}\not=0$.

\item
If $j^kg=x^2y+\sum_{i+j=k}(a_{ij}/i!j!)x^iy^j$ for $k\ge4$. Then $j^kg\sim_{\K^k} x^2y+(a_{0k}/k!)y^k$, and thus $g$ has a $D_{k+1}^{\pm}$-singularity when $a_{0k}\not=0$.
\item
If $j^kg=x^3+\sum_{i+j=k}(a_{ij}/i!j!)x^iy^j$ for $k\ge4$. Then $j^kg\sim_{\K^k}  x^3+(a_{1\,k-1}/(k-1)!)xy^{k-1}+(a_{0k}/k!)y^{k}$.
\end{enumerate}
\end{prop}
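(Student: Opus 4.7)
The plan is to work at the $k$-jet level and analyze how $\K^k$-equivalence can modify the degree-$k$ homogeneous part while keeping the prescribed lower-order part fixed. Writing $g = g_0 + g_k + O(x,y)^{k+1}$, where $g_0$ denotes the fixed low-order part ($y^2$, $x^2y$, or $x^3$) and $g_k$ is the homogeneous degree-$k$ term, I consider $\K$-actions $M\cdot(g\circ\phi)$ with multiplier $M = 1 + \mu$ for $\mu$ homogeneous of degree $k - \deg g_0$, and diffeomorphism $\phi(x,y) = (x,y) + (\xi,\eta)$ with $\xi,\eta$ homogeneous of degree $k-1$. Such actions leave $g_0$ unchanged modulo $O(x,y)^{k+1}$, and their effect on $g_k$ is exactly the linear map
\[
(\mu,\xi,\eta)\ \longmapsto\ \mu\, g_0 \;+\; \xi\,\frac{\partial g_0}{\partial x} \;+\; \eta\,\frac{\partial g_0}{\partial y},
\]
all cross-terms being of degree $\geq k+1$. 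The problem then reduces, in each case, to identifying the image of this linear map inside the space of homogeneous degree-$k$ polynomials, and reading off the uneliminable monomials as the normal form.

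For (1), $g_0 = y^2$ has gradient $(0,2y)$, so the image consists of all degree-$k$ polynomials of the form $\mu\, y^2 + 2\eta\, y$, i.e.\ those divisible by $y$, namely $\{x^iy^j : i+j=k,\ j\geq 1\}$; only $x^k$ is missed, giving $y^2 + (a_{k\,0}/k!)\,x^k$. For (2), $g_0 = x^2y$ has gradient $(2xy,x^2)$, so the image at degree $k$ lies in the ideal $(x^2y,xy,x^2) = (x)$, which contains every $x^iy^j$ with $i+j=k$ and $i\geq 1$; only $y^k$ remains, giving $x^2y + (a_{0k}/k!)\,y^k$. For (3), $g_0 = x^3$ has gradient $(3x^2,0)$, so the image is the degree-$k$ part of $(x^3,x^2) = (x^2)$, covering every $x^iy^j$ with $i+j=k$ and $i\geq 2$; the surviving monomials are $xy^{k-1}$ and $y^k$, which produces the stated form.

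The main technical point to verify is that the linear map above really is surjective onto the advertised ideal restricted to degree $k$. This is elementary by producing explicit monomial preimages: for instance, in (2) a monomial $x^iy^j$ with $i\geq 2$ is hit by taking $\eta = x^{i-2}y^j$, while one with $i=1$, $j\geq 1$ is hit by $\xi = \tfrac12\,x^{i-1}y^{j-1}$; analogous explicit choices settle (1) and (3). Matching the resulting normal forms with Table \ref{tab:ksimple} then identifies the $\K$-classes as $A_{k-1}^{\pm}$ (when $a_{k0}\neq 0$) and $D_{k+1}^{\pm}$ (when $a_{0k}\neq 0$), invoking the $k$-$\K$-determinacy recorded at the end of Section~2 to pass from the $k$-jet to the full germ. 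No essential obstacle remains beyond this bookkeeping; the entire argument is a direct $\K$-tangent space calculation made elementary by the homogeneity of $g_0$.
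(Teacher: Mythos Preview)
Your approach is essentially the same as the paper's: both produce near-identity source coordinate changes that kill all degree-$k$ monomials except the advertised ones, and the explicit monomial preimages you give sum exactly to the single coordinate changes the paper writes down (the paper in fact uses only $\mathcal{R}$-moves---no multiplier $M$ is needed, as your own analysis of the Jacobian ideal shows).

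One small slip to fix: you declare $\xi,\eta$ to be homogeneous of degree $k-1$ uniformly, but this is correct only in case~(1) where $\deg g_0=2$. In cases~(2) and~(3), $\partial g_0/\partial x$ and $\partial g_0/\partial y$ have degree $2$, so $\xi,\eta$ must have degree $k-2$ for $\xi\,\partial_x g_0+\eta\,\partial_y g_0$ to land in degree $k$; your own explicit choices (e.g.\ $\eta=x^{i-2}y^j$) already have the right degree $k-2$, so only the blanket statement needs correcting to ``degree $k-\deg g_0+1$''. With that adjusted, your linearization is exact at the $k$-jet level (the quadratic error terms land in degree $\geq 2(k-\deg g_0+1)+\deg g_0-2>k$), and the rest of the argument goes through.
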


\begin{proof}
Each of the claims (1), (2), (3) follows from the coordinate change
\begin{align}
\label{eq:changeAk}
(x,y)&\mapsto\left(x,y-\frac12\left(\sum_{i+j=k,\,j\ge1}\frac{a_{ij}}{i!j!}x^iy^{j-1}\right)\right),\\
\label{eq:changeD4}
(x,y)&\mapsto\left(x-\frac12\left(\sum_{i+j=k,\, i\ge1,\,j\ge1}\frac{a_{ij}}{i!j!}x^{i-1}y^{j-1}\right), y-\frac{a_{k0}}{k!}x^{k-2}\right).
\\
(x,y)&\mapsto\left(x-\frac13\left(\sum_{i+j=k,\,i\ge2}\frac{a_{ij}}{i!j!}x^{i-2}y^{j}\right), y \right),
\end{align}
respectively.
\end{proof}

We state our main theorem in this section giving a stratification of the space of parabolic surface-germs $S$ with respect to the $\K$-singularities of the contact maps $F$. 

\begin{thm}\label{thm:main}
If a parabolic surface germ $S$ of the form $($\ref{monge}\,$)$ satisfies one of the conditions in the ``Conditions" column in Table \ref{mainthmtable1}, then there exists 
$CS_{\bv,\Gamma}$ such that $F$ of the form \eqref{contactfcn} has one of the simple $\K$-singularities $($note that $E_*$ is not simple$)$ at $0$ in the corresponding item in the ``$\K$-sing. of $F$'' column of Table~ \ref{mainthmtable1}. 
\end{thm}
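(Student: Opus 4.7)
The plan is to exploit the freedom in choosing the directrix $\gamma$ in \eqref{eq:gamma} to normalize the contact function $F$ of \eqref{contactfcn}. Since $\gamma$ depends only on $y$, the Taylor coefficients $\gamma_1,\gamma_2,\gamma_3,\dots$ affect only the pure-$y$ monomials of $F$, namely $-\gamma_1 y$, $\tfrac{a_{02}-\gamma_2}{2}y^2$, $\tfrac{a_{03}-\gamma_3}{6}y^3$, and so on; every mixed monomial $x^iy^j$ with $i\ge1$ is rigidly determined by $f$. The very first step is to set $\gamma_1=0$, which is forced in every case so that $F$ has a critical point at the origin.

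The main case division is then according to whether the quadratic part of $F$ is kept non-zero or cancelled. If $\gamma_2\ne a_{02}$, then $j^2F$ is a nonzero multiple of $y^2$, and iterated use of Proposition \ref{prop:coordinate}(1) absorbs every higher-order monomial divisible by $y$ into a change of coordinates. The first index $k\ge3$ with $a_{k0}\ne0$ then yields $F\sim_{\K}y^2+(a_{k0}/k!)x^k$, an $A_{k-1}^{\pm}$ singularity by $k$-$\K$-determinacy. If instead we set $\gamma_2=a_{02}$, the $3$-jet of $F$ becomes the homogeneous cubic
\[
\tfrac{a_{30}}{6}x^3+\tfrac{a_{21}}{2}x^2y+\tfrac{a_{12}}{2}xy^2+\tfrac{a_{03}-\gamma_3}{6}y^3,
\]
and I split further according to its $\K^3$-type. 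When $a_{30}=0$ and $a_{21}\ne0$, I absorb the $xy^2$ term into a linear substitution in $y$ and choose $\gamma_3$ to cancel the residual $y^3$, reducing $j^3F$ to $x^2y$; Proposition \ref{prop:coordinate}(2), applied inductively, then produces the $D_{k+1}^{\pm}$ series. When the cubic is $\K^3$-equivalent to $x^3$ (a triple real linear factor, which requires $a_{30}\ne0$ together with the vanishing of a discriminant-type expression in $a_{30},a_{21},a_{12}$), Proposition \ref{prop:coordinate}(3) together with further choices of $\gamma_4,\gamma_5$ to trivialize pure-$y$ residues produces the $E_6,E_7,E_8$ classes.

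For each row of Table~\ref{mainthmtable1}, once the target normal form is reached at the appropriate jet level, I invoke the $k$-$\K$-determinacy of the simple singularities listed in Table~\ref{tab:ksimple} to conclude $\K$-equivalence of $F$ with the stated model, reading off the sign $\pm$ from the sign of the final leading coefficient.

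The main obstacle will be the coupled normalization in the $D$- and $E$-series cases: there one must simultaneously execute a multi-step coordinate change on $(x,y)$ in the spirit of \eqref{eq:changeAk}--\eqref{eq:changeD4} \emph{and} choose the higher Taylor coefficients $\gamma_j$ of the directrix to annihilate specific pure-$y$ residues that would otherwise obstruct the reduction. The bookkeeping becomes particularly delicate for the $E$-series, where fifth-order data (hence $\gamma_5$ together with the quintic coefficients of $f$) enter the picture, and one must verify that the genericity conditions emerging during the reduction match exactly the hypotheses listed in the ``Conditions'' column of Table~\ref{mainthmtable1}.
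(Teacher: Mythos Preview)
Your overall plan—set $\gamma_1=0$, branch on whether $\gamma_2=a_{02}$, and then analyse the jet of $F$—is exactly the paper's scheme. But the claim you make in the $A$-branch is wrong and would block you from reaching several rows of Table~\ref{mainthmtable1}. You assert that after iterating Proposition~\ref{prop:coordinate}(1) the reduced form is $y^2+(a_{k0}/k!)x^k$ with $k$ the least index such that $a_{k0}\ne0$. This is only true at level $k=3$. The coordinate change \eqref{eq:changeAk} at level $3$ (say, with $a_{30}=0$) substitutes $y\mapsto y-\tfrac{a_{21}}{2(a_{02}-\gamma_2)}x^2-\cdots$ after one has rescaled to make the quadratic part $y^2$; feeding this back into $y^2$ and into the cubic terms creates a new $x^4$ contribution. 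The paper (Lemma~\ref{lem:strata6_7-1}) finds the reduced $x^4$ coefficient to be $(R_{A_3}-a_{40}\gamma_2)/\bigl(12(a_{02}-\gamma_2)^2\bigr)$, a genuine function of the free parameter $\gamma_2$. Hence the $A$-type is \emph{not} fixed by the surface: varying $\gamma_2$ moves $F$ among $A_3^+$, $A_3^-$ and $A_{\ge4}$, and a further tuning of $\gamma_2,\gamma_3$ is needed to reach $A_5,A_6$. This is essential for rows (iv) and (vi), which demand contact cylinders realising several distinct $A_k$ on the same surface germ—the conditions $R_{A_4}\ne0$ and $R_{A_6}\ne0$ are precisely the obstructions to pushing this cascade further, not the vanishing of $a_{k0}$.

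There is a parallel gap in your $D/E$ branch. For $a_{30}\ne0$ you treat only the triple-root case $j^3F\sim_{\K^3}x^3$, but row (i) requires producing $D_{\ge5}$ with $a_{30}\ne0$. In the paper this comes from choosing $\gamma_3$ so that the cubic discriminant $DC_f(\gamma_3)$ of \eqref{eq:disc} vanishes; regarded as a quadratic in $\gamma_3$, its own discriminant is $Q_{D_4}^3/144$, so two real solutions exist exactly when $Q_{D_4}>0$. For those $\gamma_3$ the cubic acquires a repeated (non-triple) factor, the linear changes \eqref{eq:change1}--\eqref{eq:change2} bring $j^3F$ to $x^2y$, and then Proposition~\ref{prop:coordinate}(2) together with inductive choices of $\gamma_j$ produces $D_{k+1}^{\pm}$ for every $k\ge4$ (Lemma~\ref{lem:dcfpos}). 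When $Q_{D_4}<0$ no such $\gamma_3$ exists and only $D_4^+$ occurs; when $Q_{D_4}=0$ the double root collapses to a triple one and the $E$-series opens. Without this discriminant analysis you cannot separate rows (i), (ii), (iii), (v), nor explain the appearance of $D_{\ge5}$ in row (i). You also omit row (viii) ($a_{30}=a_{21}=0$, $a_{12}\ne0$), which needs its own reduction to $xy^2$ (Lemma~\ref{proDEonridge}).
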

\begin{table}[!htb]
{\footnotesize 
\begin{tabularx}{\textwidth}{llllc}
\hline
No. & Name & Conditions & $\K$-sing. of $F$ & Cod.\\
\hline
$\text{}^{\rule{0pt}{8pt}}$
 (i)  & $(A_2,D_4|D_{\ge 5})$ & $a_{30}\not=0, Q_{D_4}>0$ & $A_2, D_{\ge 4}$ & 1 \\[2pt]
\hline
$\text{}^{\rule{0pt}{8pt}}$
(ii) & $(A_2,D_4^{+})$  &$a_{30}\not=0, Q_{D_4}<0$ & $A_2, D_4^+$ & 1 \\[2pt]
\hline
$\text{}^{\rule{0pt}{8pt}}$
(iii) &$(A_2, D_4^{+}|E_{6}|E_{7})$  &$a_{30}\not=0, Q_{D_4}=0,$ & $A_2, D_4^+,$ & 2 \\[2pt]
&&$Q_{E_7}\not=0$ &$E_6, E_7$ &  \\[2pt]
\hline
$\text{}^{\rule{0pt}{8pt}}$
(iv) &$(A_3|A_4, D_4|D_{\ge5})$ &$a_{30}=0, a_{21}a_{40}\not=0,$ & $A_3, A_4,$ & 2 \\[2pt]
&&$R_{A_4}\not=0$ &$D_{\ge 4}$ &  \\[2pt]
\hline
$\text{}^{\rule{0pt}{8pt}}$
(v) &$(A_2, D_4^{+}|E_{6}|E_{8}|E_*)$ &$a_{30}\not=0$, $Q_{D_4}=0$, & $A_2, D_4^+, E_6,$ & 3 \\[2pt]
&&$Q_{E_7}=0, Q_{E_*}\not=0$ &$E_8, E_*$&  \\[2pt]
\hline
$\text{}^{\rule{0pt}{8pt}}$
(vi) &$(A_3|A_5|A_6, D_4|D_{\ge5})$ &$a_{30}=0, a_{21}a_{40}\not=0$, & $A_3, A_5, A_6,$ & 3 \\[2pt]
&& $R_{A_4}=0, R_{A_6}\not=0$ &$ D_{\ge 4}$& \\[2pt]
\hline
$\text{}^{\rule{0pt}{8pt}}$
(vii) &$(A_3^-, D_4|D_{\ge5})$ &$a_{30}=a_{40}=0, a_{21}\not=0$ & $A_3^-, D_{\ge 4}$ & 3 \\[2pt]
\hline
$\text{}^{\rule{0pt}{8pt}}$
(viii) &$(A_3, D_{5})$ &$a_{30}=a_{21}=0, a_{12}a_{40}\not=0$& $A_3, D_{5}$ & 3 \\\hline
\end{tabularx}
\label{mainthmtable1}
\caption{The stratification of the jet space of parabolic surface germs.}
}
\end{table}

Here, $E_*$ means that $j^5F\sim_{\K^5}x^3+xy^4$.
The symbols, $Q_{D_4}$, $Q_{E_7}$, $Q_{E_*}$, $R_{A_4}$ and $R_{A_6}$ 
are constant terms consisting of coefficients $a_{ij}$ of \eqref{monge}, and 
given by
\eqref{eq:qd4def},
\eqref{eq:qe7def},
\eqref{eq:qestdef},
\eqref{eq:ra4def} and
\eqref{eq:ra6def} respectively.

\begin{proof}[Proof of Theorem \ref{thm:main}]
We see immediately
$F$ is regular at $0$ if and only if $\gamma_1\not=0$,
and
$F$ has an $A_{\geq 2}$-singularity at $0$ if and only if $\gamma_1=0$ and $\gamma_2\not=a_{02}$.
Moreover, 
$F$ has a degenerate critical point at $0$ 
if and only if $\gamma_1=0$ and $\gamma_2=a_{02}$.

Therefore, we should investigate 
the stratification for higher order terms of $f$ and $\gamma$ in both cases $\gamma_2\not=a_{02}$ and $\gamma_2=a_{02}$. 
We will introduce some lemmas for proving Theorem~\ref{thm:main}.
Note that whether $a_{30}\not=0$ is important in our stratification,
thus we divide our proofs into two steps.
Strata of no. (i)-(iii) and (v) are given from Lemmas \ref{lem:A2}-\ref{lem:strata3_5},
where $a_{30}\not=0$ is assumed;
while strata of no. (iv) and (vi)-(viii) are 
given from Lemmas \ref{lem:strata6_7-1}-\ref{proDEonridge},
where $a_{30}=0$ is assumed.

In the following, we assume $\gamma_1=0$. First, we deal with the case $a_{30}\not=0$. By \eqref{contactfcn} and (1) of Proposition~\ref{prop:coordinate}, we easily show the following lemma.  
\begin{lem}\label{lem:A2}
If $\gamma_2 \ne a_{02}$ and $a_{30} \ne 0$, then $F$ has an 
$A_2$-singularity at $0$.
\end{lem}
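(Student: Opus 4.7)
The plan is to read off the $2$- and $3$-jets of $F$, observe that the hypotheses are exactly the non-degeneracy conditions needed for Proposition \ref{prop:coordinate}(1) at $k=3$, and then appeal to $3$-$\K$-determinacy of $A_2$. The argument is essentially a single invocation of that proposition after a scalar rescaling, so there is no real obstacle; the work is in checking that the reductions survive the scalar rescaling and that no further coefficients from higher-order terms of $f$ or $\gamma$ intervene.

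First I would note that, since $\gamma_1=0$ has already been imposed in the preceding discussion, the $2$-jet of $F$ read off from \eqref{contactfcn} is $\tfrac12(a_{02}-\gamma_2)y^2$; this is a nondegenerate quadratic form in the single coordinate $y$ precisely because $\gamma_2\ne a_{02}$. A scalar $\K$-move (multiplication by the nowhere-zero constant $2/(a_{02}-\gamma_2)$, which is exactly an element of $GL(1,\R)$ viewed as a function on $\R^2,0$) rescales the $y^2$ coefficient to $1$ without affecting the orders of the higher-order terms, producing a $3$-jet of the shape
\begin{equation*}
y^2+\sum_{i+j=3}\frac{b_{ij}}{i!\,j!}\,x^iy^j,
\end{equation*}
where $b_{30}$ is a nonzero scalar multiple of $a_{30}$ and is therefore nonzero by the second hypothesis.

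Next I would apply Proposition \ref{prop:coordinate}(1) with $k=3$: the coordinate change \eqref{eq:changeAk} eliminates every cubic monomial containing a factor of $y$, reducing the $3$-jet to $y^2+(b_{30}/6)\,x^3$; a final rescaling of $x$ then gives the $A_2$ normal form $y^2+x^3$ of Table \ref{tab:ksimple}. Since $A_2$ is $3$-$\K$-determined, this is enough to conclude that $F$ itself has an $A_2$-singularity at the origin. The geometric content of the computation is simply that $\gamma_2\ne a_{02}$ secures the nondegeneracy of the $y$-direction of the Hessian of $F$, while $a_{30}\ne 0$ secures the nondegeneracy of the cubic part restricted to the kernel $\{y=0\}$ of that Hessian, which are precisely the defining conditions of $A_2$ for a function of two variables.
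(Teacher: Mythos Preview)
Your proof is correct and follows exactly the approach the paper intends: the paper's own argument is simply ``By \eqref{contactfcn} and (1) of Proposition~\ref{prop:coordinate}, we easily show the following lemma,'' and you have spelled out precisely those two steps (the rescaling to normalize the $y^2$-coefficient, then the invocation of Proposition~\ref{prop:coordinate}(1) at $k=3$ together with $3$-$\K$-determinacy of $A_2$).
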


The cubic discriminant $DC_f(\gamma_3)$ of the cubic part of \eqref{contactfcn}
is given by
\begin{align}
\label{eq:disc}
DC_{f}(\gamma_3)&:=\frac{1}{48}\left[-a_{30}^2 \gamma_3^2+\left(2 a_{30}^2 a_{03} - 6 a_{30} a_{21} a_{12} + 4 a_{21}^3\right)\gamma_3\right. \\ 
&\left.- (a_{30}^2 a_{03}^2 - 6 a_{30} a_{21} a_{12} a_{03} + 4 a_{30} a_{12}^3 + 4 a_{21}^3 a_{03} - 3 a_{21}^2 a_{12}^2)\right].\nonumber
\end{align}
If $a_{30}\ne0$, we can regard $DC_{f}(\gamma_3)$ as a quadratic with respect to $\gamma_3$, and the discriminant of $DC_{f}(\gamma_3)$ as the quadratic with respect to $\gamma_3$ is $Q_{D_4}^3/144$, where
\begin{equation}\label{eq:qd4def}
Q_{D_4}:=a_{21}^2-a_{12}a_{30}.
\end{equation}
\begin{rem}
\label{rem:SignOfQD4}
If $Q_{D_4}>0$ then $DC_{f}(\gamma_3)$ is positive, zero or negative. Since the coefficient of $\gamma_3^2$ of $DC_f(\gamma_3)$ is negative, if $Q_{D_4}=0$ then $DC_{f}(\gamma_3)$ is negative or zero, and if $Q_{D_4}<0$ then $DC_{f}(\gamma_3)$ is always negative. Thus, if $Q_{D_4}\ge0$, then the equation $DC_{f}(\gamma_3)=0$ is solved for $\gamma_3$ to give
\begin{equation}
\label{eq:gamma3}
\gamma_3=\frac{2a_{21}^3-3a_{12}a_{21}a_{30}+a_{03}a_{30}^2+2\varepsilon (a_{21}^2-a_{12}a_{30})^{3/2}}{a_{30}^2},
\end{equation}
where $\varepsilon=\pm1$. 
\end{rem}


\begin{lem}\label{lem:dcfpos}
Suppose $\gamma_2=a_{02}$, $a_{30}\not=0$ and $Q_{D_4}>0$. 
\begin{enumerate}
\item
If $DC_f(\gamma_3)\gtrless 0$, then $F$ has a 
$D_4^{\mp}$-singularity at $0$.
\item
If 
$DC_f(\gamma_3)=0$, then 
for any integer $k\ge4$ there exist $\gamma_j$ $(4\le j \le k)$ such that $F$ has a 
$D_{k+1}^\pm$-singularity at $0$. 
\end{enumerate}
\end{lem}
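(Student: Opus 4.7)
The plan is to reduce everything to the structure of the $3$-jet of $F$ as a binary cubic form, exploiting that $D_4^\pm$ is $3$-$\K$-determined and $D_{k+1}^\pm$ is $k$-$\K$-determined. Under the hypotheses $\gamma_1=0$ and $\gamma_2=a_{02}$, the $2$-jet of $F$ vanishes identically, so the $3$-jet is exactly the cubic form
\[
C(x,y)=\tfrac{1}{6}\bigl(a_{30}x^3+3a_{21}x^2y+3a_{12}xy^2+(a_{03}-\gamma_3)y^3\bigr).
\]
Thus part (1) reduces to reading off the splitting type of $C$, and part (2) reduces to arranging, by a choice of higher-order $\gamma_j$'s, that the $k$-jet of $F$ becomes $\K^k$-equivalent to $x^2y+cy^k$ with $c\ne0$.

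For part (1), I would verify by direct computation that, up to a positive scalar multiple, $DC_f(\gamma_3)$ coincides with the classical discriminant of $C$ viewed as a binary cubic. The sign convention then matches the normal forms: $DC_f(\gamma_3)>0$ yields three distinct real linear factors and a $3$-jet $\K^3$-equivalent to $x^2y-y^3$ (hence $D_4^-$), while $DC_f(\gamma_3)<0$ yields a single real linear factor and a $3$-jet $\K^3$-equivalent to $x^2y+y^3$ (hence $D_4^+$). By $3$-$\K$-determinacy of $D_4^\pm$, the corresponding singularity of $F$ follows.

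For part (2), when $DC_f(\gamma_3)=0$ the cubic $C$ has a repeated linear factor. The key first step is to rule out that $C$ is a pure cube: a short calculation of the Hessian of $C$ shows that its $x^2$-coefficient is a nonzero multiple of $-Q_{D_4}$, so $Q_{D_4}>0$ forces $C$ to split as $\lambda L_1^2 L_2$ with linearly independent linear forms $L_1, L_2$. Since $a_{30}\ne 0$, the variable $y$ does not divide $C$, so neither $L_1$ nor $L_2$ is parallel to $y$. Taking $(u,v)=(L_1,L_2)$ as new coordinates and absorbing $\lambda$ into the multiplier of the $\K$-equivalence gives $j^3F\sim_{\K^3}u^2v$, and the original $y$ then has the form $\alpha u+\beta v$ with $\beta\ne0$.

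Next I would build $\gamma_4,\ldots,\gamma_k$ by induction on $k$ to obtain the $D_{k+1}^\pm$ class. Assume inductively that $\gamma_4,\ldots,\gamma_{k-1}$ have been chosen so that $j^{k-1}F\sim_{\K^{k-1}}u^2v$. Proposition~\ref{prop:coordinate}(2) applied at level $k$ then yields $j^kF\sim_{\K^k}u^2v+c_kv^k$ for some $c_k\in\R$ determined by the previous choices. Since $\gamma_k$ enters $F$ only through $-\gamma_k y^k/k!=-\gamma_k(\alpha u+\beta v)^k/k!$, whose $v^k$-coefficient is $-\gamma_k\beta^k/k!\ne0$, the quantity $c_k$ is an affine function of $\gamma_k$ with nonzero slope. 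Hence $\gamma_k$ can be chosen freely so that $c_k$ takes any prescribed value, in particular any nonzero value of either sign, producing the desired $D_{k+1}^\pm$-singularity. The main subtlety I anticipate is verifying that the inductive $\K^{j-1}$-reduction depends only on $\gamma_3,\ldots,\gamma_{j-1}$ and not on the yet-unchosen $\gamma_j$; this holds because $\gamma_j$ first enters $F$ in degree $j$, beyond the reach of the $(j-1)$-jet, so the free parameter at each step of the induction really is free.
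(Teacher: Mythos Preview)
Your proposal is correct and follows essentially the same strategy as the paper: both arguments reduce part~(1) to the sign of the cubic discriminant of $j^3F$, and handle part~(2) by linearly normalizing $j^3F$ to the form $u^2v$ and then observing that the $v^j$-coefficient after applying Proposition~\ref{prop:coordinate}(2) depends affinely on $\gamma_j$ with nonzero slope, so the $D_{k+1}^\pm$ class can be achieved by choosing $\gamma_4,\ldots,\gamma_k$ inductively.

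The only notable difference is in presentation. The paper carries out part~(2) by writing down explicit linear coordinate changes \eqref{eq:change1} and \eqref{eq:change2} (depending on $a_{30},a_{21},\varepsilon,\sqrt{Q_{D_4}}$) and computing the exact coefficient $-8\varepsilon a_{30}^2/(9j!\sqrt{Q_{D_4}})$ of $\gamma_j$ in $c_{0j}$; you instead factor $C=\lambda L_1^2L_2$ abstractly, use the Hessian to rule out a triple root, and argue via $y=\alpha u+\beta v$ with $\beta\ne0$ that the $\gamma_j$-slope is $-\lambda^{-1}\beta^j/j!$. Your route is a bit more conceptual and avoids carrying the explicit constants, while the paper's route yields formulas that are reused elsewhere (e.g.\ in Remark~\ref{reminv}); mathematically the two are equivalent.
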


\begin{proof}[Proof of Lemma \ref{lem:dcfpos}]
It follows from $\gamma_2 = a_{02}$ that $j^2 F=0$. Furthermore, since $a_{30}\ne0$ and $Q_{D_4}>0$, by Remark~\ref{rem:SignOfQD4} we see that $DC_f(\gamma_3)$ is positive, zero or negative. It is well-known that a function-germ $g:\R^2,0 \to \R,0$ with $j^2 g=0$ has a $D_4^-$ (respectively, $D_4^+$) -singularity at $0$ if and only if the cubic discriminant of $j^3 g$ is positive (respectively, negative), which proves (1). 

If $DC_f(\gamma_3)=0$, that is, $\gamma_3$ satisfies \eqref{eq:gamma3}, then the change of the coordinate 
\begin{equation}\label{eq:change1}
(x,y)\mapsto\left(\dfrac{x-a_{21}y}{a_{30}},y\right)
\end{equation}
and multiplying with $6a_{30}^2$  yield that $j^3F$ is $\K^3$-equivalent to 
\begin{eqnarray}\label{gam3fgam}
x^3 - 3(a_{21}^2 - a_{12} a_{30}) x y^2 - 2 \varepsilon (a_{21}^2 - a_{12} a_{30})^{3/2} y^3.
\end{eqnarray}
Moreover, the change of the coordinate
\begin{equation}\label{eq:change2}
(x,y)\mapsto\left(x-\varepsilon \sqrt{Q_{D_4}}y, \varepsilon/(2\sqrt{Q_{D_4}})x+y\right)
\end{equation}
transforms $j^3F$ into $(- 27 \varepsilon \sqrt{Q_{D_4}}/4)x^2y$. 
Therefore, using the changes of coordinates \eqref{eq:change1} and \eqref{eq:change2} and multiplying with $6a_{30}^2\cdot (-4\varepsilon/(27\sqrt{Q_{D_4}}))$, we obtain 
\begin{equation}
\label{eq:Dk}
j^kF \sim_{\K^k} x^2 y + \sum_{4\le i+j \le k}\dfrac{c_{ij}}{i!j!}x^iy^j,
\end{equation}
where $c_{ij}$ are constants consisting of the coefficients $a_{ij}$ of \eqref{monge} and $\gamma_j$ of \eqref{eq:gamma}. Especially, the coefficients $c_{0j}$ of $y^j$ of \eqref{eq:Dk} are given by
\begin{equation}
\label{eq:c0j}
c_{0j} = \dfrac{\hat{c}_{0j}}{a_{30}^{j-2}}-\dfrac{8\varepsilon a_{30}^2}{9j!\sqrt{Q_{D_4}}}\gamma_j,
\end{equation}
where $\hat{c}_{0j}$ consists of $a_{il}$ $(4 \le i+l \le j)$, $a_{21}$ and $\sqrt{Q_{D_4}}$, because \eqref{eq:change1} and \eqref{eq:change2} have properties of preserving the terms of $y^j$ of \eqref{contactfcn}. Furthermore, after a suitable change of coordinates based on \eqref{eq:changeD4},
the coefficient of $y^j$ itself changes,
but the coefficient of $\gamma_j$ appearing in the coefficient of $y^j$ never changes.
Hence, by the change of coordinates based on \eqref{eq:changeD4}, 
we can regard the coefficients of $y^j$ consisting of $a_{is}$ and $\gamma_r$ as linear expressions $d_{0j}(\gamma_j)$ in $\gamma_j$, and thus equations $d_{0j}(\gamma_j)=0$ can be solved for $\gamma_j$. As a consequence, (2) of Proposition~\ref{prop:coordinate} yields that for any $k\ge 4$ there exist $\gamma_j$ $(4 \le j \le k)$ such that $F$ has a $D_{k+1}^\pm$-singularity, and the proof is complete. 
\end{proof}

From Lemmas~\ref{lem:A2} and \ref{lem:dcfpos}, we get the stratum (i) of Table \ref{mainthmtable1}. We denote this stratum 
by $(A_2,D_4^\pm|D_{\ge5})$-stratum. Lemmas~\ref{lem:A2} and \ref{lem:dcfpos} show that whether $\gamma_2 \ne 0$ determines that $F$ has a singularity of type $A_2$ or $D_k$ at $0$, and Lemma~\ref{lem:dcfpos} shows that $F$ has a $D_4^{\pm}$-singularity at $0$ for almost all values of $\gamma_3$, while it has a $D_{\ge5}$-singularity there for only two distinct $\gamma_3$ satisfying $DC_{f}(\gamma_3)=0$. This is the reason for which we use the notation above. 

\begin{lem}\label{lem:stratumii}
If $\gamma_2=a_{02}$, $a_{30}\not=0$ and $Q_{D_4}<0$, then $F$ 
has a $D_4^{+}$-singularity at $0$ for any $\gamma_3$.
\end{lem}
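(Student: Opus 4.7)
The plan is to observe that the hypothesis $\gamma_2 = a_{02}$ (together with $\gamma_1=0$) kills the 2-jet of $F$, so that the type of degenerate critical point at the origin is governed by the cubic form $j^3F$; then to use the standard fact that among cubic forms in two variables a negative cubic discriminant corresponds to one real and two complex linear factors, giving a $D_4^+$-singularity, and finally to verify that the cubic discriminant $DC_f(\gamma_3)$ is strictly negative for every value of $\gamma_3$ when $Q_{D_4}<0$.

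More concretely, from \eqref{contactfcn} the assumption $\gamma_1=0$ and $\gamma_2=a_{02}$ gives $j^2F=0$, so
\[
j^3F=\frac{1}{6}\bigl(a_{30}x^3+3a_{21}x^2y+3a_{12}xy^2+(a_{03}-\gamma_3)y^3\bigr).
\]
The cubic discriminant of this ternary form (up to the positive factor already absorbed into \eqref{eq:disc}) is $DC_f(\gamma_3)$. It is a well-known characterisation that a function-germ with vanishing 2-jet has a $D_4^-$ (resp.\ $D_4^+$) singularity at $0$ exactly when the discriminant of its cubic part is positive (resp.\ negative); this is precisely the criterion already invoked in part (1) of Lemma~\ref{lem:dcfpos}, and here we apply it in the negative case.

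The remaining point is thus the sign of $DC_f(\gamma_3)$ as a function of $\gamma_3$. Viewed as a quadratic polynomial in $\gamma_3$, the leading coefficient $-a_{30}^2/48$ is negative because $a_{30}\neq0$, and the discriminant of this quadratic equals $Q_{D_4}^{\,3}/144$. Since $Q_{D_4}<0$, we have $Q_{D_4}^{\,3}<0$, so the discriminant is negative and $DC_f$ has no real zero; combined with its negative leading coefficient this gives
\[
DC_f(\gamma_3)<0\quad\text{for every }\gamma_3\in\mathbb{R},
\]
which is exactly the content of Remark~\ref{rem:SignOfQD4}. Applying the $D_4^\pm$-criterion above then yields that $F$ has a $D_4^+$-singularity at $0$ for every choice of $\gamma_3$, completing the proof.

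There is no real obstacle here; the lemma is essentially a bookkeeping consequence of the already-computed discriminant formula \eqref{eq:disc} and the sign analysis laid out in Remark~\ref{rem:SignOfQD4}, together with the standard classification of cubic forms used in Lemma~\ref{lem:dcfpos}. The only thing to be careful about is to state explicitly that $DC_f(\gamma_3)$ never vanishes, so that one really lands in the non-degenerate $D_4^+$ stratum rather than in a $D_{\ge 5}$ stratum.
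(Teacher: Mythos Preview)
Your argument is correct and follows exactly the same route as the paper: the paper's proof simply cites Remark~\ref{rem:SignOfQD4} for the inequality $DC_f(\gamma_3)<0$ and then refers back to the $D_4^\pm$-criterion used in (1) of Lemma~\ref{lem:dcfpos}. You have merely unpacked those two references explicitly, so there is no substantive difference.
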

\begin{proof}[Proof of Lemma~\ref{lem:stratumii}]
Remark~\ref{rem:SignOfQD4} shows that if $Q_{D_4}<0$ then $DC_f(\gamma_3)<0$. Thus, in the same way as the proof of (1) of Lemma~\ref{lem:dcfpos}, we have the proof.  
\end{proof}

From Lemmas~\ref{lem:A2} and \ref{lem:stratumii}, we obtain the stratum (ii) of Table~\ref{mainthmtable1}. In the same manner as in the previous case, this stratum 
is denoted by $(A_2,D_4^+)$-stratum.
We set
\begin{align}
Q_{E_6} :=& a
_{40}a_{21}^4-4a_{31}a_{30}a_{21}^3+6a_{22}a_{30}^2a_{21}^2
-4a_{13}a_{30}^3a_{21}+a_{04}a_{30}^4,\label{eq:qe6def}\\
Q_{E_7} :=& -a_{40}a_{21}^3+3a_{31}a_{30}a_{21}^2
-3a_{22}a_{30}^2a_{21}+a_{13}a_{30}^3,\label{eq:qe7def}\\
Q_{E_8}:=&10a_{21}^2a_{23}a_{30}^3-5a_{14}a_{21}a_{30}^4
+a_{05}a_{30}^5\nonumber\\
&\hspace{5mm}-10a_{21}^3a_{30}^2a_{32}
+5a_{21}^4a_{30}a_{41}-a_{21}^5a_{50},\label{eq:qe8def}\\
Q_{E_*}:=&
-3a_{21}^4a_{40}^2
+a_{21}^3(12a_{31}a_{40}+a_{21}a_{50})a_{30}%
\label{eq:qestdef}\\
&\hspace{5mm}-2a_{21}^2(6a_{31}^2+3a_{22}a_{40}+2a_{21}a_{41})
a_{30}^2%
\nonumber\\
&\hspace{5mm}+6a_{21}(2a_{22}a_{31}+a_{21}a_{32})a_{30}^3%
+(-3a_{22}^2-4a_{21}a_{23})
a_{30}^4%
+a_{14}a_{30}^5.\nonumber
\end{align}

\begin{lem}\label{lem:strata3_5}
Suppose that $\gamma_2=a_{02}$, $a_{30}\not=0$ and $Q_{D_4}=
0$. 
\begin{enumerate}
\item
If $DC_f(\gamma_3)\not= 0$, then $F$
has a $D_4^{+}$-singularity at $0$.
\item
If $DC_f(\gamma_3)=0$ and $\gamma_4\ne Q_{E_6}/a_{30}^4$, then $F$
has an $E_6$-singularity at $0$.
\item
If $DC_f(\gamma_3)=0$, $\gamma_4=Q_{E_6}/a_{30}^4$ and $Q_{E_7}\ne 0$, then $F$
has an $E_7$-singularity at $0$.
\item
If $DC_f(\gamma_3)=0$, $\gamma_4=Q_{E_6}/a_{30}^4$, $Q_{E_7}=0$ and $\gamma_5\not=Q_{E_8}/a_{30}^5$, then $F$
has an $E_8$-singularity at $0$.
\item
If $DC_f(\gamma_3)=0$, $\gamma_4=Q_{E_6}/a_{30}^4$, $Q_{E_7}=0$,
$\gamma_5=Q_{E_8}/a_{30}^5$ and $Q_{E_*}\not=0$, then
$j^5F\sim_{\K^5}x^3+xy^4$.
\end{enumerate}
\end{lem}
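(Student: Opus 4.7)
Part (1) reduces directly to Lemma \ref{lem:dcfpos}(1). Because $Q_{D_4}=0$, Remark \ref{rem:SignOfQD4} forces $DC_f(\gamma_3)\le 0$, so $DC_f(\gamma_3)\ne 0$ means $DC_f(\gamma_3)<0$, and the cubic-discriminant criterion already invoked in the proof of Lemma \ref{lem:dcfpos}(1) yields a $D_4^+$-singularity.

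For parts (2)--(5), I would assume $DC_f(\gamma_3)=0$. After the coordinate change \eqref{eq:change1} and multiplication by $6a_{30}^2$, $j^3 F$ takes the form \eqref{gam3fgam}; substituting $Q_{D_4}=0$ collapses \eqref{gam3fgam} to $x^3$, so $j^3 F\sim_{\K^3} x^3$. All remaining items are then obtained by applying Proposition \ref{prop:coordinate}(3) at degrees $4$ and $5$.

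At degree $4$, Proposition \ref{prop:coordinate}(3) with $k=4$ reduces $j^4 F$ to a normal form $x^3+\alpha\, xy^3+\beta\, y^4$, where $\alpha,\beta$ are polynomials in the $a_{ij}$ and $\gamma_4$. The key observation — analogous to the linearity in $\gamma_j$ exploited in the proof of Lemma \ref{lem:dcfpos}(2) — is that the coordinate changes used so far, together with the one from Proposition \ref{prop:coordinate}(3) itself, never touch the coefficient of $\gamma_4$ in the $y^4$ term, so $\beta$ is affine in $\gamma_4$. The unique solution of $\beta=0$ works out to $\gamma_4=Q_{E_6}/a_{30}^4$ with $Q_{E_6}$ as in \eqref{eq:qe6def}. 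If $\gamma_4\ne Q_{E_6}/a_{30}^4$, then $\beta\ne 0$ and a rescaling yields the $E_6$-normal form, proving (2). If $\gamma_4=Q_{E_6}/a_{30}^4$, then $\beta=0$ and a direct expansion shows that $\alpha$ is a nonzero scalar multiple of $Q_{E_7}$ from \eqref{eq:qe7def}; its non-vanishing gives $j^4 F\sim_{\K^4} x^3+xy^3$, proving (3).

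When $Q_{E_7}=0$, the reduction above gives $j^4F\sim_{\K^4} x^3$, so I repeat the argument at degree $5$: Proposition \ref{prop:coordinate}(3) with $k=5$ produces $j^5 F\sim_{\K^5} x^3+\alpha'\, xy^4+\beta'\, y^5$, with $\beta'$ affine in $\gamma_5$ and vanishing exactly when $\gamma_5=Q_{E_8}/a_{30}^5$; on this locus, $\alpha'$ reduces to a nonzero multiple of $Q_{E_*}$ from \eqref{eq:qestdef}. This proves (4) and (5). The main obstacle is not conceptual but symbolic: one must verify that after cascading the coordinate changes \eqref{eq:change1}, the degree-$4$ analogue of \eqref{eq:changeD4}, and the reduction of Proposition \ref{prop:coordinate}(3), the coefficients of $y^4$, $xy^3$, $y^5$, $xy^4$ contract to precisely the polynomials \eqref{eq:qe6def}--\eqref{eq:qestdef}. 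This is a lengthy but routine computation best handled by computer algebra; the only structural point one has to ensure is that each $\gamma_j$ enters linearly in the obstructing coefficient, which is what makes the successive solvability conditions consistent.
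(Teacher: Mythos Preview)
Your proposal is correct and matches the paper's approach: after \eqref{eq:change1} one has $j^3F\sim_{\K^3}x^3$ since $Q_{D_4}=0$, and Proposition~\ref{prop:coordinate}(3) at degrees $4$ and $5$ then yields exactly the normal forms the paper records, namely $j^4F\sim_{\K^4}x^3+(Q_{E_7}/a_{30}^2)xy^3+\big((Q_{E_6}-a_{30}^4\gamma_4)/(4a_{30}^2)\big)y^4$ and, under the further degeneracies, $j^5F\sim_{\K^5}x^3+(Q_{E_*}/(4a_{30}^4))xy^4+\big((Q_{E_8}-a_{30}^5\gamma_5)/(20a_{30}^3)\big)y^5$. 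Two small corrections: the change \eqref{eq:changeD4} plays no role here (only \eqref{eq:change1} followed by the reduction of Proposition~\ref{prop:coordinate}(3) is needed), and in part~(2) a genuine coordinate change---not merely a rescaling---is required to absorb the $xy^3$ term once the $y^4$ coefficient is nonzero.
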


\begin{proof}[Proof of Lemma \ref{lem:strata3_5}]
By Remark~\ref{rem:SignOfQD4}, now $DC_{f}(\gamma_3)$ is negative or zero.

If $DC_f(\gamma_3)\not= 0$, then the cubic discriminant of $j^3F$ is negative. Thus, $F$ has a $D_4^{+}$-singularity at $0$, which proves (1). 

On the other hand, if $DC_{f}(\gamma_3)=0$, then it follows from the assumption $Q_{D_4}=0$ and \eqref{gam3fgam} that $j^3F\sim_{\K^3}x^3$. By (3) of Proposition~\ref{prop:coordinate} we have 
\begin{equation}
\label{eq:E6-7}
j^4F \sim_{\K^4} x^3+\dfrac{Q_{E_7}}{a_{30}^2}xy^3+\dfrac{(Q_{E_6}-a_{30}^4\gamma_4)}{4a_{30}^2}y^4.
\end{equation}
If $\gamma_4\ne Q_{E_6}/a_{30}^4$, then a suitable change of coordinates   removes the term $Q_{E_7}/a_{30}^2 \;xy^3$ of \eqref{eq:E6-7}. This proves (2) and (3).

If $DC_{f}(\gamma_3)=Q_{D_4}=Q_{E_6}-a_{30}^4\gamma_4=Q_{E_7}=0$, then it follows form (3) of Proposition~\ref{prop:coordinate} that we have
$$
j^5F \sim_{\K^5} x^3+\dfrac{Q_{E_*}}{4a_{30}^4}xy^4+\dfrac{(Q_{E_8}-a_{30}^5\gamma_5)}{20a_{30}^3}y^5.
$$
A similar argument as above proves (4) and (5).  
\end{proof}

From Lemmas~\ref{lem:A2} and \ref{lem:strata3_5}, we obtain the strata (iii) and (v) of Table \ref{mainthmtable1}. In the same manner as in the previous cases, the strata (iii) and (v) are denoted by $(A_2, D_4^+|E_6|E_7)$-stratum and $(A_2, D_4^+|E_6|E_8|E_*)$-stratum, respectively. 

Next we consider the case $a_{30}=0$.
We set
\begin{align}
R_{A_3}:=&-3a_{21}^2+a_{02}a_{40},\label{eq:ra3def}\\
R_{A_4}:=&-10a_{21}a_{31}a_{40}+5a_{12}a_{40}^2+3a_{21}^2a_{50},
\label{eq:ra4def}\\
R_{A_5}:=&
150 a_{21}a_{31}^2a_{40}^2
-225a_{21}a_{22}a_{40}^3
+25a_{03}a_{40}^4
\label{eq:ra5def}\\
&+225a_{21}^2a_{40}^2a_{41}
-180a_{21}^2a_{31}a_{40}a_{50}
+54a_{21}^3a_{5}^2
-45a_{21}^3a_{40}a_{60}\nonumber\\
R_{A_6}:=&756a_{21}^3a_{50}^3 - 315a_{21}^2a_{50}(10a_{31}a_{50}+3a_{21}a_{60})a_{40}
\label{eq:ra6def}\\
&+75a_{21} \left(56a_{31}^2a_{50}+21a_{21}a_{31}a_{60}+3a_{21}(14a_{41}a_{50}+a_{21}a_{70})\right)a_{40}^2
\nonumber\\
& -175\left(10a_{31}^3+30a_{21}a_{31}a_{41}+9a_{21}(a_{22}a_{50}+a_{21}a_{51})\right)a_{40}^3
\nonumber\\
&+2625(a_{22}a_{31}+a_{21}a_{32})a_{40}^4-875a_{13}a_{40}^5.\nonumber
\end{align}
\begin{lem}\label{lem:strata6_7-1}
Suppose that $\gamma_2 \ne a_{02}$, $a_{30}=0$ and $a_{21}a_{40}\not=0$.
\begin{enumerate}
\item
If $R_{A_{3}}-a_{40}\gamma_2 \gtrless 0$, then $F$ 
has an $A_3^\pm$-singularity at $0$.

\item
If $\gamma_2 = R_{A_3}/a_{40}$ and $R_{A_4}\not=0$, then $F$
has an $A_4$-singularity at $0$.

\item
If $\gamma_2 = R_{A_{3}}/a_{40}$, $R_{A_4} =0$ and $R_{A_5} - 25 a_{40}^4 \gamma_3 \gtrless 0$, then $F$
has an $A_5^\pm$-singularity at $0$.

\item
If $\gamma_2  = R_{A_{3}}/a_{40}$, $R_{A_4} = 0$, $\gamma_3 = R_{A_5}/(25a_{40}^4)$ and $R_{A_6}\not=0$, then $F$
has an $A_6$-singularity at $0$.
\end{enumerate}
\end{lem}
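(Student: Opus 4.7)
The plan is to apply the reduction of Proposition~\ref{prop:coordinate}(1) iteratively. Since $\gamma_1=0$ and $\gamma_2\ne a_{02}$, the 2-jet of $F$ equals $Ay^2$ with $A=(a_{02}-\gamma_2)/2\ne 0$, so $F$ is Morse in the $y$-direction and degenerate in the $x$-direction. The hypothesis $a_{30}=0$ forces the cubic part to be $P_3=yQ_2(x,y)$ with $Q_2=\frac{a_{21}}{2}x^2+\frac{a_{12}}{2}xy+\frac{a_{03}-\gamma_3}{6}y^2$, so the substitution $y\mapsto y-Q_2/(2A)$ (the $k=3$ instance of \eqref{eq:changeAk}) annihilates $P_3$ while producing new quartic terms. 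Iterating this idea at each order $k\ge 3$, $j^kF$ is reduced to $Ay^2+c_k x^k$ modulo higher-order terms; an $A_{k-1}^\pm$-singularity then occurs as soon as the first nonvanishing $c_k$ appears, with the $\pm$ sign (when applicable) given by $\mathrm{sign}(A\,c_k)$.

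First I compute $c_4$ directly. Substituting $y\mapsto y-Q_2/(2A)$ produces the quartic contributions $Q_2^2/(4A)-Q_2\,\partial_yP_3/(2A)+P_4$, and the pure $x^4$ coefficient reduces to
\[
c_4=\frac{a_{21}^2}{16A}-\frac{a_{21}^2}{8A}+\frac{a_{40}}{24}=\frac{a_{40}(a_{02}-\gamma_2)-3a_{21}^2}{24(a_{02}-\gamma_2)}=\frac{R_{A_3}-a_{40}\gamma_2}{24(a_{02}-\gamma_2)}.
\]
Thus $\mathrm{sign}(A\,c_4)=\mathrm{sign}(R_{A_3}-a_{40}\gamma_2)$, proving (1).

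For (2)--(4), when $c_4=0$ (so $\gamma_2=R_{A_3}/a_{40}$) the next iteration of \eqref{eq:changeAk} absorbs the remaining $y$-containing quartic terms into $Ay^2$ and produces a new $x^5$ coefficient $c_5$. The key structural observation---most transparent via the Morse splitting $F\sim_{\K}Ay^2+h(x)$ with $h(x)=F(x,y^*(x))$, where $y^*(x)$ solves $\partial_yF=0$---is that $\gamma_j$ enters $\partial_y^jF(x,0)$ only as the constant $-\gamma_j$, so $\gamma_j$ first appears in $y^*(x)$ at order $x^{2(j-1)}$ (through the $(y^*)^{j-1}$ term in the implicit equation) and therefore first appears in $h(x)$ at order $x^{2j}$. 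Hence $c_5$ depends only on the $a_{ij}$; a direct calculation shows $c_5$ is a nonzero scalar multiple of $R_{A_4}$, proving (2). Continuing, $c_6$ depends on $\gamma_3$ linearly and equals a constant times $R_{A_5}-25a_{40}^4\gamma_3$, proving (3); and when this vanishes, $c_7$ (which a priori involves $\gamma_3$ but not $\gamma_4,\gamma_5$) becomes proportional to $R_{A_6}$ under the substitution $\gamma_3=R_{A_5}/(25a_{40}^4)$, proving (4).

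The main obstacle is the explicit computation of the constants appearing in $c_5$, $c_6$, and $c_7$. Each iteration of \eqref{eq:changeAk} generates many cross terms, and tracking precisely those contributing to the pure $x^k$ monomial (under the successive vanishing hypotheses on earlier coefficients) requires substantial polynomial manipulation. The qualitative triangular dependence on the $\gamma_j$ is transparent from the Morse-splitting viewpoint, but extracting the exact polynomials $R_{A_4},R_{A_5},R_{A_6}$ in the $a_{ij}$ is a combinatorially heavy task, typically performed with computer-algebra assistance. Once these identities are established, Proposition~\ref{prop:coordinate}(1) immediately reads off the $A_{k-1}^\pm$ singularity type.
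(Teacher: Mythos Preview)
Your proposal is correct and follows essentially the same approach as the paper: iterate Proposition~\ref{prop:coordinate}(1) to reduce $j^kF$ to $Ay^2+c_kx^k$ and read off the $A_{k-1}$ type. Your explicit computation of $c_4$ agrees with the paper's (the paper first scales by $2/(a_{02}-\gamma_2)$, which accounts for the extra factor of $a_{02}-\gamma_2$ in the denominator), and for parts (2)--(4) the paper likewise just says ``same procedure'' without displaying the calculations, so your deferral of the heavy polynomial manipulations is at the same level of detail. Your Morse-splitting observation---that $\partial h/\partial\gamma_j=-(y^*)^j/j!$ by the critical-point condition, so $\gamma_j$ first enters $h(x)$ at order $x^{2j}$---is a genuine clarification that the paper does not make explicit; it neatly explains why $c_5$ is independent of $\gamma_3$, why $c_6$ is affine in $\gamma_3$, and why $c_7$ involves no $\gamma_4$.
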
 
\begin{proof}[Proof of Lemma \ref{lem:strata6_7-1}]

Since $\gamma_2 \ne a_{02}$ and $a_{30}=0$, multiplying with $2/(a_{02}-\gamma_2)$ and (1) of Proposition~\ref{prop:coordinate} give
$$
j^4F \sim_{\K^4} y^2+\dfrac{(R_{A_3}-a_{40}\gamma_2)}{12(a_{02}-\gamma_2)^2}x^4,
$$
which proves (1). 
The proofs for the rest of the assertions are given following the same procedure, so we will skip the proofs. 
\end{proof}




\begin{lem}\label{lem:strata6_7-2}
Suppose that $\gamma_2 \ne a_{02}$ and $a_{30}=0$.
\begin{enumerate}
\item
If $a_{21}\not=0$ and $a_{40}=0$, then $F$ 
has an $A_3^-$-singularity at $0$. 

\item
If $a_{21}=0$ and $a_{40}\not=0$, then $F$ 
has an $A_3^\pm$-singularity at $0$.
\end{enumerate}
\end{lem}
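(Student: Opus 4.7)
The plan is to reuse the reduction already carried out in the proof of Lemma~\ref{lem:strata6_7-1}. The key observation is that the formula
$$
j^4F \sim_{\K^4} y^2 + \dfrac{R_{A_3} - a_{40}\gamma_2}{12(a_{02}-\gamma_2)^2}\, x^4
$$
was derived only from the hypotheses $\gamma_1=0$, $\gamma_2\neq a_{02}$, and $a_{30}=0$ (the further assumption $a_{21}a_{40}\neq 0$ in Lemma~\ref{lem:strata6_7-1} was used only to continue to higher-order strata). So it remains valid under the hypotheses of the present lemma, and the proof reduces to analyzing the sign of the coefficient of $x^4$ in the two degenerate regimes.

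For part (1), I substitute $a_{40}=0$ into \eqref{eq:ra3def} to get $R_{A_3}=-3a_{21}^2$, so
$$
j^4F \sim_{\K^4} y^2 - \dfrac{a_{21}^2}{4(a_{02}-\gamma_2)^2}\, x^4.
$$
Since $a_{21}\neq 0$, this coefficient is nonzero and strictly negative, so after the obvious $\K$-rescaling $j^4 F$ becomes $\K^4$-equivalent to $y^2 - x^4$ (equivalently $x^2 - y^4$), which is the $A_3^-$ normal form. Because the $A_3^\pm$-singularities are $4$-$\K$-determined, this identifies the $\K$-singularity of $F$ itself as $A_3^-$.

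For part (2), I substitute $a_{21}=0$ into \eqref{eq:ra3def} to get $R_{A_3} = a_{02}a_{40}$, so the numerator becomes $a_{40}(a_{02}-\gamma_2)$ and
$$
j^4F \sim_{\K^4} y^2 + \dfrac{a_{40}}{12(a_{02}-\gamma_2)}\, x^4.
$$
Since $a_{40}\neq 0$ and $\gamma_2\neq a_{02}$, the coefficient is nonzero, but now its sign is that of $a_{40}(a_{02}-\gamma_2)$, which can be either positive or negative depending on the surface and the choice of $\gamma_2$. Rescaling gives $j^4F \sim_{\K^4} y^2 \pm x^4$, and again $4$-$\K$-determinacy promotes this to an $A_3^\pm$-singularity of $F$.

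I do not expect any genuine obstacle here: once one observes that the hypothesis $a_{21}a_{40}\neq 0$ of Lemma~\ref{lem:strata6_7-1} is not used in the derivation of the $x^4$-coefficient, the entire content of Lemma~\ref{lem:strata6_7-2} is a direct sign computation in the two remaining degenerate cases, and the only point requiring a word of care is that in case (1) the sign is forced to be negative (because $-3a_{21}^2 < 0$), while in case (2) both signs genuinely occur, which is exactly what the notation $A_3^-$ versus $A_3^\pm$ in the statement records.
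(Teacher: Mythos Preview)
Your proof is correct and essentially coincides with the paper's own argument: the paper also reduces $j^4F$ (via multiplication by $2/(a_{02}-\gamma_2)$ and Proposition~\ref{prop:coordinate}(1)) to $y^2 - \tfrac{a_{21}^2}{4(a_{02}-\gamma_2)^2}x^4$ in case (1) and to $y^2 + \tfrac{a_{40}}{12(a_{02}-\gamma_2)}x^4$ in case (2), and reads off the $A_3^-$ versus $A_3^\pm$ conclusion from the sign. The only cosmetic difference is that you phrase this as specializing the $R_{A_3}$-formula from Lemma~\ref{lem:strata6_7-1}, whereas the paper writes the two specialized formulas directly.
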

\begin{proof}[Proof of Lemma \ref{lem:strata6_7-2}]

If $a_{21}\not=0$ and $a_{40}=0$, then multiplying with $2/(a_{02}-\gamma_2)$ and (1) of Proposition~\ref{prop:coordinate} give
$$
j^4F \sim_{\K^4} y^2-\dfrac{a_{21}^2}{4(a_{02}-\gamma_2)^2}x^4. 
$$
Thus, the coefficient of $x^4$ is always negative, which proves (1). 

Similarly to the proof of (1), if $a_{21}=0$ and $a_{40}\not=0$, then we have 
$$
j^4F \sim_{\K^4} y^2+\dfrac{a_{40}}{12(a_{02}-\gamma_2)}x^4,
$$
which completes the proof of (2).
\end{proof}

%




It is easily seen that if $\gamma_2=a_{02}$, $a_{30}=0$ and $a_{21}\not=0$, then by \eqref{eq:disc} we have $DC_{f}(\gamma_3) = a_{21}^2(-4a_{21} \gamma_3 + S_{D_4})$, where $S_{D_4}:=4a_{03} a_{21} - 3a_{12}^2$. 
\begin{lem}
\label{lem:strata4,6}
Suppose that $\gamma_2=a_{02}$, $a_{30}=0$ and $a_{21}\not=0$.
\begin{enumerate}
\item
If $-4a_{21} \gamma_3 + S_{D_4} \gtrless 0$, then
$F$ 
has a $D_4^{\mp}$-singularity at $0$.

\item 
If $\gamma_3=S_{D_4}/(4a_{21})$, then 
for any integer $k\ge4$ there exist $\gamma_j$ $(4\le j \le k)$ such that $F$ 
has a $D_{k+1}^\pm$-singularity at $0$. 
\end{enumerate}
\end{lem}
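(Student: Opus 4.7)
The plan is to mirror the structure of the proof of Lemma~\ref{lem:dcfpos}, since the geometric situation here ($j^2F=0$ with a corank-$2$ critical point) is the same, only the concrete form of the $3$-jet differs. Because $\gamma_2=a_{02}$ we have $j^2F=0$, and because $a_{30}=0$ the cubic part already carries $y$ as a linear factor:
$$
j^3F=\tfrac{y}{6}\bigl(3a_{21}x^2+3a_{12}xy+(a_{03}-\gamma_3)y^2\bigr).
$$
Substituting $a_{30}=0$ into \eqref{eq:disc} yields, after a short direct computation, $DC_f(\gamma_3)=a_{21}^2\bigl(-4a_{21}\gamma_3+S_{D_4}\bigr)$, so under $a_{21}\neq 0$ the sign of $DC_f(\gamma_3)$ agrees with that of $-4a_{21}\gamma_3+S_{D_4}$.

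For part~(1) I would invoke the same classical criterion as in the proof of Lemma~\ref{lem:dcfpos}(1): a function-germ with vanishing $2$-jet has a $D_4^-$ (resp.\ $D_4^+$) singularity exactly when the cubic discriminant of its $3$-jet is positive (resp.\ negative). Combining this with the sign identification above immediately gives the stated $D_4^\mp$ classification.

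For part~(2), the hypothesis $\gamma_3=S_{D_4}/(4a_{21})$ makes the quadratic factor a perfect square, so
$$
j^3F=\tfrac{a_{21}}{2}\,y\Bigl(x+\tfrac{a_{12}}{2a_{21}}y\Bigr)^{\!2}.
$$
A linear shift $(x,y)\mapsto\bigl(x-\tfrac{a_{12}}{2a_{21}}y,\,y\bigr)$ followed by an overall rescaling by $2/a_{21}$ (a $\K$-action on the target) brings $j^3F$ to the normal form $x^2y$. From this point the argument is formally identical to the final part of the proof of Lemma~\ref{lem:dcfpos}(2): I would iteratively apply the coordinate changes \eqref{eq:changeD4} supplied by Proposition~\ref{prop:coordinate}(2) to produce, for each $k\ge 4$,
$$
j^kF\sim_{\K^k} x^2y+\sum_{4\le i+j\le k}\frac{c_{ij}}{i!j!}x^iy^j,
$$
in which each pure-$y$ coefficient $c_{0j}$ depends affine-linearly on $\gamma_j$ with nonzero $\gamma_j$-coefficient. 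One then solves $c_{04}=\cdots=c_{0,k-1}=0$ for $\gamma_4,\ldots,\gamma_{k-1}$ inductively, and finally picks $\gamma_k$ so that $c_{0k}\neq 0$; by Proposition~\ref{prop:coordinate}(2) the result is a $D_{k+1}^\pm$-singularity.

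The main obstacle is exactly as in Lemma~\ref{lem:dcfpos}(2): confirming that the $\gamma_j\mapsto c_{0j}$ dependence remains non-degenerate throughout the iterated reduction. The key observation, already isolated in the discussion following \eqref{eq:c0j}, is that $\gamma_j$ enters the contact map \eqref{contactfcn} only through the single monomial $-\gamma_j y^j/j!$, and none of the subsequent coordinate changes — the initial linear $x$-shift, nor the shifts prescribed by \eqref{eq:changeD4} — alter that coefficient by anything other than a nonzero scalar. Consequently the linear bookkeeping is identical to the $a_{30}\neq 0$ case and need not be recomputed; it is enough to cite it.
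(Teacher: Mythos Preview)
Your proposal is correct and follows essentially the same route as the paper's proof. The only cosmetic differences are in the specific linear change used to normalize $j^3F$ to $x^2y$ (the paper takes $(x,y)\mapsto((x-a_{12}y)/(2a_{21}),y)$ and multiplies by $8a_{21}$, whereas you shift $x$ and rescale by $2/a_{21}$) and in the level of explicitness: the paper records the exact form $c_{0j}=\hat c_{0j}/a_{21}^{j-1}-(8a_{21}/j!)\gamma_j$, while you argue the affine dependence on $\gamma_j$ abstractly; both then defer the inductive solving to the argument of Lemma~\ref{lem:dcfpos}(2).
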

\begin{proof}[Proof of Lemma \ref{lem:strata4,6}]
The proof of (1) is immediate (see the proof of (1) of Lemma~\ref{lem:dcfpos}). 

If $\gamma_3 = S_{D_4}/(4a_{21})$, then $j^3F(0)=y(2a_{21}x+a_{12}y)^2/(8a_{21})$. Thus, using the change of coordinates $(x,y) \mapsto ((x-a_{12}y)/(2a_{21}),y)$ and multiplying with $8a_{21}$, we obtain
\begin{equation}
\label{eq:Dk2}
j^kF\sim_{\K^k} x^2y + \sum_{4\le i+j \le k}\dfrac{c_{ij}}{i!j!}x^iy^j,
\end{equation}
where $c_{ij}$ are constants consisting of the coefficients $a_{ij}$ of \eqref{monge} and $\gamma_j$ of \eqref{eq:gamma}. Especially, the coefficients $c_{0j}$ of $y^j$ of \eqref{eq:Dk2} are given by
\begin{equation*}
c_{0j} = \dfrac{\hat{c}_{0j}}{a_{21}^{j-1}}-\dfrac{8a_{21}}{j!}\gamma_j,
\end{equation*}
where $\hat{c}_{0j}$ consists of $a_{il}$ $(4 \leq i+j \leq j)$, $a_{21}$ and $a_{12}$. The rest of the proof is given by the same argument of the proof of (2) of Lemma~\ref{lem:dcfpos}. 
\end{proof}

\begin{lem}\label{proDEonridge}
Suppose that $\gamma_2=a_{02}$, $\gamma_3=a_{03}$, $a_{30}=a_{21}=0$ and $a_{12}\not=0$. 
If $a_{40}\not=0$, then $F$ 
has a $D_{5}^\pm$-singularity at $0$.
\end{lem}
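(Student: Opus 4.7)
The plan is to reduce the computation to a direct application of Proposition~\ref{prop:coordinate}(2). Under the standing assumption $\gamma_1=0$ together with the new hypotheses $\gamma_2=a_{02}$, $\gamma_3=a_{03}$ and $a_{30}=a_{21}=0$, substituting into \eqref{contactfcn} kills the quadratic part entirely and collapses the cubic part to
\[
j^3F=\frac{a_{12}}{2}\,xy^{2},
\]
which is nondegenerate because $a_{12}\neq 0$. This cubic is already in the ``$D$-series'' shape, but with the roles of $x$ and $y$ swapped relative to the normal form in Proposition~\ref{prop:coordinate}(2).

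The next step is to bring the cubic part into the required form $x^{2}y$. I would perform the linear change of coordinates $(x,y)\mapsto(y,x)$ and then multiply the resulting function by $2/a_{12}$; both operations are $\K^{3}$-equivalences. Applied to the full $4$-jet, these operations produce
\[
j^{4}F\ \sim_{\K^{4}}\ x^{2}y+\frac{2}{a_{12}}\!\left(\frac{a_{40}}{24}y^{4}+\frac{a_{31}}{6}xy^{3}+\frac{a_{22}}{4}x^{2}y^{2}+\frac{a_{13}}{6}x^{3}y+\frac{a_{04}-\gamma_{4}}{24}x^{4}\right).
\]
In particular, the coefficient of $y^{4}$ in this normalized $4$-jet equals $a_{40}/(12\,a_{12})$, which is nonzero by the hypothesis $a_{40}\neq 0$.

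Finally I would apply Proposition~\ref{prop:coordinate}(2) with $k=4$. That proposition, via the coordinate change \eqref{eq:changeD4}, eliminates the mixed quartic terms above and leaves
\[
j^{4}F\ \sim_{\K^{4}}\ x^{2}y+\frac{a_{40}}{12\,a_{12}}\,y^{4},
\]
so $F$ has a $D_{5}^{\pm}$-singularity at $0$, the sign being determined by the sign of $a_{40}/a_{12}$. Since the $D_{5}^{\pm}$-singularity is $4$-$\K$-determined, the $4$-jet computation is enough to conclude. There is no serious obstacle here: the only bookkeeping to watch is that the cubic cross-terms $x^{3}y$, $x^{2}y^{2}$, $xy^{3}$ and the quartic $x^{4}$ all get absorbed by the normalizing change \eqref{eq:changeD4} without affecting the $y^{4}$ coefficient, which is exactly what Proposition~\ref{prop:coordinate}(2) guarantees.
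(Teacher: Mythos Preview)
Your proof is correct and follows essentially the same approach as the paper: compute $j^{3}F=(a_{12}/2)xy^{2}$, normalize to the $x^{2}y$ form (the paper leaves the swap of $x$ and $y$ implicit and works with $xy^{2}$ directly, while you make it explicit), and then invoke Proposition~\ref{prop:coordinate}(2) to reduce $j^{4}F$ to $x^{2}y+\dfrac{a_{40}}{12a_{12}}y^{4}$. One tiny wording slip: the terms $x^{3}y$, $x^{2}y^{2}$, $xy^{3}$ you call ``cubic cross-terms'' are of course quartic; this does not affect the argument.
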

\begin{proof}[Proof of Lemma \ref{proDEonridge}]
By the assumptions and \eqref{contactfcn}, it is easy to check that $j^3F= (a_{21}/2)xy^2$ holds.
Furthermore, multiplying with $2/a_{21}$ and using (2) of Proposition~\ref{prop:coordinate}, we obtain
\begin{eqnarray*}
j^4F\sim_{\K^4}
xy^2+\dfrac{a_{40}}{12a_{12}}x^4,
\end{eqnarray*}
which completes the proof. 
\end{proof}

From Lemmas~\ref{lem:strata6_7-1} and \ref{lem:strata4,6}, we obtain the strata (iv) and (vi) of Table~\ref{mainthmtable1}. From Lemmas~\ref{lem:strata6_7-2} and \ref{lem:strata4,6}, we obtain the stratum (vii) of Table~\ref{mainthmtable1}. Moreover, from Lemmas~\ref{lem:strata6_7-2} and \ref{proDEonridge}, we obtain the stratum (viii) of Table~\ref{mainthmtable1}. In the same manner as in the previous cases, the strata (iv), (vi), (vii) and (viii) are denoted by $(A_3|A_4,D_4|D_{\ge5})$-stratum, $(A_3|A_5|A_6,D_4|D_{\ge5})$-stratum, $(A_3^-,D_4|D_{\ge5})$-stratum and $(A_3,D_5)$-stratum, respectively. 

Summing up the lemmas above, we now obtain Theorem \ref{thm:main}.
\end{proof}

\begin{rem}\label{reminv}
We get several kinds of contact cylindrical surfaces in the above proof,
and their existence and suitable jets are invariant under  Euclidean motions of surfaces.
For example, for a parabolic surface germ belonging to the stratum of No. $(vi)$ 
i.e. $(A_3^\pm|A_5^\pm|A_6,D_4^\pm)$-stratum,
there exists an $A_6$-contact cylindrical surface
whose $3$-jets of the base curves are uniquely given by
$$
j^3\gamma=\frac{R_{A_3}}{2a_{40}}y^2+\frac{R_{A_5}}{150a_{40}}y^3
$$
(see Lemma \ref{lem:strata6_7-1}).
Thus the notion of contact cylindrical surface gives new differential geometrical invariants. 
\end{rem}

\section{$\A$-singularity of orthogonal projection and contact cylindrical surface}\label{singoforthoangauss}
In this section, we investigate an $\A$-singularity of
the orthogonal projection of a parabolic surface-germ along the asymptotic direction,
which gives a new stratification of parabolic surface-germs. 
Then we compare such a stratification 
with the other stratification in Table \ref{mainthmtable1}
which is induced from types of contact cylindrical surfaces.
To sum up,  we have Theorem \ref{characterizationthm} and Table \ref{cor-comp-all}.
Throughout this section, we always consider a parabolic surface germ $S$ 
expressed as in (\ref{monge}).

Let $\pi_v:\R^3\to v^\perp (\subset \R^3)$ be the orthogonal projection
with the kernel direction $\bv\in S^2$.
We call $\pi_v|S: S,0 \to \bv^\perp$ {\it the orthogonal projection of $S$ along $\bv$}. 
The orthogonal projection of our surface-germ as in \eqref{monge} from the asymptotic direction $\bv=(1,0,0)$ into $\bv^\perp$ (the $yz$-plane in this case)  can be expressed as
$$
\phi(x,y):=(y,f(x,y)).
$$
It is easily seen that $\phi(x,y)$ has an unstable $\A$-singularity whose singular set is singular at $0$.
Note that the orthogonal projections of a parabolic surface-germ $S$ along the other directions have no singularities or just fold-singularities at $0$, so we do not consider such cases here.
We have the following propositions.
\begin{prop}\label{prop:projection}
We assume that $a_{30}\ne0$.
\begin{enumerate}
\item\label{it:bl} 
The map-germ $\phi$ at $0$ is a beaks $($respectively, a lips$)$
if and only if
$Q_{D_4}>0$ $($respectively, $Q_{D_4}<0)$.
\item\label{it:goose}
The map-germ $\phi$ at $0$ is a goose
if and only if
$Q_{D_4}=0$ and $Q_{E_7}\not=0$.
\item\label{it:uggoose}
The map-germ $\phi$ at $0$ is a positive ugly goose
$($respectively, a negative ugly goose$)$
if and only if
$Q_{D_4}=Q_{E_7}=0$ and 
$Q_{E_*}>0$
$($respectively, $Q_{E_*}<0)$.
\end{enumerate}
\end{prop}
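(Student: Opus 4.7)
The plan is to reduce $\phi(x,y)=(y,f(x,y))$ to the normal forms of Table~\ref{Riegerlist} by $\A$-coordinate changes, exploiting the key fact that the first component of $\phi$ equals the source coordinate $y$. This structural feature means that target diffeomorphisms of the form $(Y,Z)\mapsto(Y,Z-\gamma(Y))$ preserve the shape of $\phi$ and simply replace $f$ by the contact function $F=f-\gamma$ for any chosen jet of $\gamma$; thus $\phi\sim_{\A}(y,F)$ and we may choose $\gamma$ at will. Similarly, source diffeomorphisms $(x,y)\mapsto(\Phi(x,y),y)$ act on $F$ as an $y$-parametrized family of $x$-reparametrizations. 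Together these allowed changes are exactly the ones that make the invariants $Q_{D_4}$, $Q_{E_7}$ and $Q_{E_*}$ from Section~3 appear naturally.

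For (\ref{it:bl}), the singular set of $\phi$ is $\{f_x=0\}$, whose $2$-jet is $\tfrac{1}{2}(a_{30}x^2+2a_{21}xy+a_{12}y^2)$ with discriminant equal to $Q_{D_4}$. The sign of $Q_{D_4}$ distinguishes the case of two transverse real branches through $0$ (beaks) from the case of an isolated singular point (lips). After swapping the source variables to match Rieger's convention that the first component is $x$, a linear source change together with an appropriate $\gamma$ reduces $j^3\phi$ to a scalar multiple of $(x,y^3\pm x^2y)$, with the sign matching $\operatorname{sgn}(Q_{D_4})$; 3-$\A$-determinacy of beaks and lips finishes (\ref{it:bl}). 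For (\ref{it:goose}) and (\ref{it:uggoose}), assume $Q_{D_4}=0$, so the quadratic part of $f_x$ is a perfect square. The reductions \eqref{eq:change1} and \eqref{eq:change2} used in the proof of Lemma~\ref{lem:strata3_5} apply to $\phi$ as well, since each is a linear source change and the accompanying multipliers are constants that can be absorbed by rescaling the target coordinate $Z$. This brings the second component of $\phi$ to $x^3$ plus a controlled error, whose coefficient of $x^3y$ at the $4$-jet level is a nonzero scalar multiple of $Q_{E_7}/a_{30}^{r}$ (mirroring \eqref{eq:E6-7}); hence $Q_{E_7}\ne 0$ gives goose via $4$-$\A$-determinacy, proving (\ref{it:goose}). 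When additionally $Q_{E_7}=0$, continuing one order further yields a coefficient of $x^4y$ that is a positive scalar multiple of $Q_{E_*}/a_{30}^{s}$, and $5$-$\A$-determinacy of ugly goose then matches the sign $\operatorname{sgn}(Q_{E_*})$ with the $\pm$ type, giving (\ref{it:uggoose}).

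The main obstacle is verifying that the reductions carried out in the proof of Theorem~\ref{thm:main} lift from $\K$-equivalences of the contact function $F$ to $\A$-equivalences of the projection $\phi=(y,F)$. The $\K$-equivalences there allow multiplication by a general nonvanishing function $M(x,y)$, whereas the $\A$-equivalences preserving the structure $(y,\cdot)$ permit only constants (absorbed via linear rescaling of $Z$) or functions of $y$ alone (absorbed via $(Y,Z)\mapsto(Y,\sigma(Y,Z))$). Inspection of the multipliers in Lemmas~\ref{lem:dcfpos} and \ref{lem:strata3_5} shows them to be constants of the form $6a_{30}^{2}$, $-4\varepsilon/(27\sqrt{Q_{D_4}})$, etc., and the coordinate changes \eqref{eq:change1}, \eqref{eq:change2} do not mix $y$ with $x$ in a way that would break the $(y,\cdot)$ structure. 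This compatibility check, together with the sign tracking that reads $\operatorname{sgn}(Q_{E_*})$ from the explicit formula \eqref{eq:qestdef}, is the core content of the proof.
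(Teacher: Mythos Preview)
Your strategy of lifting the $\K$-reductions of Section~3 to $\A$-equivalences of $\phi=(y,F)$---using that target changes $(Y,Z)\mapsto(Y,Z-\gamma(Y))$, constant rescalings of $Z$, and source changes fixing $y$ are $\A$-moves preserving the $(y,\cdot)$ form---is sound and genuinely different from the paper's proof. The paper instead parametrizes via a kernel vector $(\xi,\eta)$ of the quadratic part of $f_x$ as in \eqref{eq:vector}, writes out the reduced jet \eqref{eq:j5phigoo} directly, and appeals to the dedicated $\A$-reduction Lemma~\ref{lem:ellinochange}(1), identifying $\widetilde{a_{13}}$ with $Q_{E_7}$ and $3\widetilde{a_{22}}^{\,2}-\widetilde{a_{30}}\widetilde{a_{14}}$ with a positive multiple of $Q_{E_*}$. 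That route stays entirely within $\A$-moves and avoids the $\K$-to-$\A$ compatibility check you flag as the main obstacle; your route is more conceptual in that it explains \emph{why} the very invariants $Q_{E_7}$, $Q_{E_*}$ governing the contact cylinders also govern the $\A$-type of the projection.

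However, your compatibility check contains an error: the change \eqref{eq:change2} sends $y$ to $\varepsilon(2\sqrt{Q_{D_4}})^{-1}x+y$, so it \emph{does} mix $x$ into the second source coordinate and does \emph{not} preserve the $(y,\cdot)$ form---contrary to what you assert. This is not fatal, because \eqref{eq:change2} is used only in Lemma~\ref{lem:dcfpos} (the $Q_{D_4}>0$ case), not in Lemma~\ref{lem:strata3_5}; under $Q_{D_4}=0$ only \eqref{eq:change1} and the change in Proposition~\ref{prop:coordinate}(3) are invoked, and both fix $y$. So you should excise the reference to \eqref{eq:change2} and to the multiplier $-4\varepsilon/(27\sqrt{Q_{D_4}})$, which is undefined at $Q_{D_4}=0$. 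Two smaller slips: the relevant monomials in the second component of $\phi$ are $xy^3$ and $xy^4$ (as in \eqref{eq:E6-7} and the $E_*$ reduction in Lemma~\ref{lem:strata3_5}), not $x^3y$ and $x^4y$; after the variable swap matching Rieger's convention these become $x^3y$ and $x^4y$, but you are working before the swap. With these corrections your argument goes through.
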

\begin{prop}\label{prop:projectiona30}
We assume that $a_{30}=0$.
\begin{enumerate}
\item\label{itm:a3001}
The map-germ $\phi$ at $0$ is a gulls if and only if
$a_{21}a_{40}\not=0$ and $R_{A_4}\not=0$.
\item\label{itm:a3002}
The map-germ $\phi$ at $0$ is an ugly gulls if and only if
$a_{21}a_{40}\not=0$, $R_{A_4}=0$ and $R_{A_6}\ne0$.
\item\label{itm:a3003}
The map-germ $\phi$ at $0$ is a $12$-singularity if and only if
$a_{21}\ne0$, $a_{40}=0$ and $a_{50}(a_{21} a_{60}-5 a_{31} a_{50})\ne0$.
\item\label{itm:a3004}
The map-germ $\phi$ at $0$ is a $16^+$-singularity 
$($respectively, $16^-$-singularity$)$
if and only if
$a_{21}=0$, $a_{12}a_{40}\ne0$ and
$10 a_{31}^2-10 a_{22} a_{40}+a_{12} a_{50}>0$,
$($respectively, $10 a_{31}^2-10 a_{22} a_{40}+a_{12} a_{50}<0)$.
\end{enumerate}
\end{prop}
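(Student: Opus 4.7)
The plan is to bring $\phi(x,y)=(y,f(x,y))$ with $a_{30}=0$ into each of the four Rieger normal forms in Table~\ref{Riegerlist} by explicit source and target diffeomorphisms, reading off each stated algebraic condition as the obstruction to further degeneracy. The overall scheme is the same as the one underlying Proposition~\ref{prop:projection}, but the reductions must be pushed further. As preparation I would apply the target change $(u,v)\mapsto(u,v-f(0,u))$ to kill every pure-$y$ monomial of $f$, and then swap the source coordinates so that $\phi$ becomes $(X,F(X,Y))$ with $F(X,Y)=f(Y,X)-f(0,X)$; every monomial of $F$ now contains at least one factor of $Y$, and $F$ is amenable to $\A$-equivalences built from source changes $(X,Y)\mapsto(X,Y+YP(X,Y))$ together with target changes $(u,v)\mapsto(u,v(1+R(u,v)))$, with occasional $X$-source perturbations absorbed by target adjustments. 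This is the setup in which the Rieger normal forms of Table~\ref{Riegerlist} appear, with $X$ as the preserved first coordinate.

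For Case~(\ref{itm:a3001}) (gulls), $a_{21}a_{40}\ne 0$: the 3-jet of $F$ starts with $\tfrac{a_{21}}{2}XY^2$, matching $xy^2$ in the gulls form. By source changes of the type $(X,Y)\mapsto(X,Y+XP)$ I would kill every $X^2$-divisible monomial of $F$, reducing it to $\tfrac{a_{21}}{2}XY^2+\sum_{j\ge 4}c_jY^j$. Since $a_{40}\ne 0$, $c_4=\tfrac{a_{40}}{24}\ne 0$, so after target rescaling the 4-jet is $XY^2+Y^4$. The remaining obstruction is the reduced $Y^5$-coefficient, which a direct computation identifies as a nonzero scalar multiple of $R_{A_4}$; by 5-determinacy the gulls normal form is attained exactly when $R_{A_4}\ne 0$. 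For Case~(\ref{itm:a3002}) (ugly gulls) I push the same reduction one jet further: when $R_{A_4}=0$ the reduced $Y^5$-coefficient vanishes and the $Y^6$-coefficient can be killed by an intermediate source change; the next obstruction is the reduced $Y^7$-coefficient, which evaluates to a nonzero multiple of $R_{A_6}$.

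For Case~(\ref{itm:a3003}) (12-singularity), $a_{21}\ne 0$ preserves the $XY^2$ leading term while $a_{40}=0$ forces $c_4=0$, so the first nonzero pure-$Y$ monomial becomes $\tfrac{a_{50}}{120}Y^5$; the 5-jet reduces to $XY^2+Y^5$, and the next obstruction (the reduced $Y^6$-coefficient, needed for the 12-normal form) turns out to be a nonzero multiple of $a_{21}a_{60}-5a_{31}a_{50}$. For Case~(\ref{itm:a3004}) ($16^\pm$-singularity), the vanishing $a_{21}=0$ kills $XY^2$, so the leading quadratic in $X$ becomes $\tfrac{a_{12}}{2}X^2Y$, matching $x^2y$; since $a_{40}\ne 0$ a $Y^4$-term still survives, and the sign of the reduced $Y^5$-coefficient (which is expected to equal the sign of $10a_{31}^2-10a_{22}a_{40}+a_{12}a_{50}$) determines the $\pm$.

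The main obstacle will be the symbolic bookkeeping. Although the $\A$-determinacy theorem caps the jet degree that must be examined for each case, extracting the \emph{exact} polynomial expressions $R_{A_4}$, $R_{A_6}$, $a_{21}a_{60}-5a_{31}a_{50}$, and $10a_{31}^2-10a_{22}a_{40}+a_{12}a_{50}$ requires carefully tracking how each $a_{ij}$ enters through the cascading source and target changes; any slip in accounting for $\A$-removable versus non-removable coefficients would produce wrong formulas. A further small care point is verifying the reverse direction of each ``if and only if'': the reduction identifies the $\A$-class of $\phi$ as a function of the $a_{ij}$, so within the stratum fixed by the preceding conditions, the next condition is simultaneously necessary and sufficient.
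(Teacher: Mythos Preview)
Your proposal is correct and follows essentially the same route as the paper: reduce $\phi$ to the Rieger normal forms by explicit source/target changes and read off the algebraic obstruction at each jet level. The only difference is packaging---the paper factors the last reduction step through Lemma~\ref{lem:ellinochange} (drawn from \cite{kabata}), so that e.g.\ for ugly gulls it identifies the obstruction as the combination $35\alpha_{32}\alpha_{40}^2-21\alpha_{21}\alpha_{40}\alpha_{51}+3\alpha_{21}^2\alpha_{70}$ rather than a fully reduced $Y^7$-coefficient, but the strategy and the resulting conditions are the same.
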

To prove these propositions, we prepare the following lemma
which gives the changes of coefficients of jets
by certain coordinate changes.
\begin{lemma}\label{lem:ellinochange}
{\rm (1)}
Let $\phi=(y,a_{30}x^3/3!+\sum_{i+j=4,5}a_{ij}x^iy^j/(i!j!))$, $(a_{30}\ne0)$.
Then $j^4\phi$ is $\A^4$-equivalent to
$(y,a_{30}x^3/3!+a_{13}xy^3/3!)$.
If $a_{13}=0$, then $j^5\phi$ is $\A^5$-equivalent to
$$
\left(
y,\dfrac{a_{30}}{3!}x^3
-\dfrac{3 a_{22}^2-a_{30}a_{14}}{24 a_{30}}xy^4\right).$$
{\rm (2)}
Let 
$\phi=(
y,a_{21}x^2y/2+a_{40}x^4/4!
+\sum_{i+j=5,6,7}a_{ij}x^iy^j/(i!j!))$,
$(a_{21}a_{40}\ne0)$.
Then $j^5\phi$ is $\A^5$-equivalent to
$(y,a_{21}x^2y/2+a_{40}x^4/4!+a_{50}x^5/5!)$.
If $a_{50}=0$, 
then $j^7\phi$ is $\A^7$-equivalent to
$(y,a_{21}x^2y/2+a_{40}x^4/4!+\overline{a_{70}}x^7/7!)$,
where
$$
\overline{a_{70}}=
\dfrac{35 a_{32} a_{40}^2 - 21 a_{21} a_{40} a_{51} + 3 a_{21}^2 a_{70}}
{15120 a_{21}^2}.
$$
{\rm (3)}
Let 
$
\phi=(
y,a_{21}x^2y/2
+\sum_{i+j=5,6}a_{ij}x^iy^j/(i!j!))$, $(a_{21}\ne0)$.
Then $j^6\phi$ is 
$\A^6$-equivalent to
$(y,a_{21}x^2y/2+a_{50}x^5/5!+a_{60}x^6/6!)$.\\
{\rm (4)}
Let 
$
\phi=(
y,a_{12}xy^2/2
+\sum_{i+j=4,5}a_{ij}x^iy^j/(i!j!))$, $(a_{12}\ne0, a_{31}=0)$.
Then $j^5\phi$ is 
$\A^5$-equivalent to
$$\left
(y,\dfrac{a_{12}}{2}xy^2
+\dfrac{a_{40}}{4!}x^4+
\dfrac{-10 a_{22} a_{40} + a_{12} a_{50}}{5!a_{12}}x^5\right).
$$
\end{lemma}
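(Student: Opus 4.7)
All four assertions are $\A^k$-normal-form reductions for map-germs $\phi=(y,g(x,y))$. The plan is to restrict to source diffeomorphisms $(x,y)\mapsto(X(x,y),y)$ preserving the source $y$-coordinate, and to target diffeomorphisms $(u,v)\mapsto(u,H(u,v))$ preserving the first target coordinate. Under these moves, $g$ is replaced by $H(y,g(X(x,y),y))$. In particular, a source change $X=x+\sigma(x,y)$ gives $g\mapsto g+\sigma g_x+\tfrac12\sigma^2 g_{xx}+\cdots$; the target shift $H(u,v)=v-\gamma(u)$ kills a pure $y$-term $\gamma(y)$; and a unit multiplication $H(u,v)=(1+\lambda(u,v)v)\,v$ adds $\lambda(y,g)g^2+\cdots$ to $g$, which is relevant only when the degree in play is at least twice the order of the leading term of $g$. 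The strategy at each degree is to choose $\sigma,\gamma,\lambda$ so as to kill the maximum number of monomials, and then to compute the coefficients of the surviving ones, including the cross-contributions from source changes made at earlier degrees.

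For part (1), $g_x=(a_{30}/2)x^2+O(3)$ with $a_{30}\ne 0$, so source changes can kill every $x^iy^j$-monomial with $i\ge 2$, target shifts kill pure $y^j$-monomials, and only $xy^j$-monomials survive at each degree. At degree $4$ the sole survivor is $a_{13}xy^3/3!$, proving the first claim. For the second claim, assume $a_{13}=0$; the source change $x=\tilde x-(a_{22}/(2a_{30}))y^2$ used to kill $a_{22}x^2y^2/4$ depends only on $y$, and I would trace its effect on the coefficient of $\tilde xy^4$. Substituting into $(a_{30}/3!)x^3$ generates the cross term $(a_{30}/2)(a_{22}/(2a_{30}))^2\,\tilde xy^4=a_{22}^2/(8a_{30})\,\tilde xy^4$; substituting into $a_{22}x^2y^2/4$ generates $-a_{22}^2/(4a_{30})\,\tilde xy^4$; the original $a_{14}xy^4/4!$ is unchanged. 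Summing, the coefficient of $\tilde xy^4$ in the normal form is $a_{14}/24-a_{22}^2/(8a_{30})=(a_{30}a_{14}-3a_{22}^2)/(24a_{30})=-(3a_{22}^2-a_{30}a_{14})/(24a_{30})$, as claimed. The remaining source changes (killing $x^4,x^3y$, etc.) and the target shift (killing $y^j$-terms) can be checked to contribute nothing further to $\tilde xy^4$.

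For parts (2), (3), (4) the plan is identical, with leading structures $a_{21}x^2y/2+a_{40}x^4/4!$, $a_{21}x^2y/2$, and $a_{12}xy^2/2$ respectively. In each case one determines the surviving monomials from the leading forms of $g_x,g_y$ together with the target moves, and then computes their coefficients by summing the original contributions and the quadratic cross-effects of the previously applied reparametrizations. The main obstacle is the bookkeeping in part (2) for $\overline{a_{70}}$: the coefficient of $x^7/7!$ in the normal form when $a_{50}=0$ receives contributions from the original $a_{70}$, from the interaction of the degree-$5$ source change (killing $a_{51}x^5y/5!$) with the leading $a_{40}x^4/4!$ (producing an $a_{21}a_{40}a_{51}$-term), and from the interaction of the degree-$6$ source change with the leading $a_{21}x^2y/2$ (producing an $a_{32}a_{40}^2$-term); after normalization one obtains $\overline{a_{70}}=(35a_{32}a_{40}^2-21a_{21}a_{40}a_{51}+3a_{21}^2a_{70})/(15120\,a_{21}^2)$. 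Parts (3) and (4) are shorter analogs of the same calculation, where the corresponding ``unkillable'' monomials are $x^5,x^6$ and $x^4,x^5$ respectively, and the coefficient of $x^5$ in part (4) is the non-trivial one to track carefully.
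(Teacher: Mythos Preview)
Your approach is correct and essentially the same as the paper's. Both arguments use source coordinate changes of the form $(x,y)\mapsto(x+\sigma(x,y),y)$ together with target shifts $(u,v)\mapsto(u,v-h(u))$ to reduce each jet to the stated normal form; the paper cites \cite[Propositions 3.5(2), 3.6(2)]{kabata} for parts (1) and (2) and writes out the explicit substitutions for (3) and (4), whereas you spell out the mechanism (identify which monomials are killable from the leading part of $g_x$, then track the quadratic cross-terms) and carry out the $xy^4$-coefficient computation in (1) in full.
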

\begin{proof}
See
\cite[Proposition 3.6 (2)]{kabata} for a proof of (1) and
\cite[Proposition 3.5 (2)]{kabata} for a proof of (2).
Although some coefficients are
assumed to be $1$ in the proofs in \cite{kabata}, 
the same proofs work and we obtain the assertions.
We show (3).
Replacing $x$ by 
$$
x - \dfrac{1}{6}\left(\dfrac{a_{41}}{4 a_{21}}x^3+\dfrac{a_{32}}{6 a_{21}}x^2y
+\dfrac{a_{23}}{6 a_{21}}xy^2+\dfrac{a_{14}}{4 a_{21}}y^3\right),
$$
then 
$j^6f=a_{21}x^2y/2+a_{50}x^5/5!+\sum_{i+j=6}a_{ij}x^iy^j/(i!j!)+h_1(y)$.
Further, replacing $x$ by 
$$
x - \dfrac{1}{24}
\left(\dfrac{a_{51}}{5 a_{21}}x^4+\dfrac{a_{42}}{8 a_{21}}x^3y
+\dfrac{a_{33}}{9 a_{21}}x^2y^2
+\dfrac{a_{24}}{8 a_{21}}xy^3+\dfrac{a_{15}}{5 a_{21}}y^4\right),
$$
then 
$j^6f=a_{21}x^2y/2+a_{50}x^5/5!+a_{60}x^6/6!+h_2(y)$.
Here, $h_1(y)$ and $h_2(y)$ are some functions.
This shows the assertion.
The assertion (4) is shown by
replacing $x$ by 
$
x-a_{22}x^2/(2a_{12})-a_{13}xy/(3a_{12})
$.
\end{proof}

\begin{proof}[Proof of Proposition {\rm \ref{prop:projection}}]
Replacing $x$ by $x-(a_{21}/a_{30})y$, we have
$$
j^3\phi=
\left(y,\dfrac{a_{30}}6x^3-\dfrac{Q_{D_4}}{2a_{30}}xy^2+h(y)\right),
$$
where $h(y)$ is a function of a variable $y$.
By a coordinate change in the target space, we can eliminate $h(y)$,
and we see the assertion \eqref{it:bl} by the $3$-$\A$-determinacy of
a beaks and lips.
Next we assume $Q_{D_4}=0$.
Then there exist a non-zero constant $s$ and a vector $(\xi, \eta)$ $(\eta\ne0)$
such that
\begin{equation}
\label{eq:vector}
\begin{pmatrix}
a_{30} & a_{21} \\ a_{21} & a_{12}
\end{pmatrix}
= s
\begin{pmatrix}
\eta^2 & -\xi \eta \\ -\xi \eta & \xi^2
\end{pmatrix}
\end{equation}
(see \cite[\S 4.1]{FH2012} for example).
Under the condition \eqref{eq:vector},
\begin{align}
j^5\phi=&
\left(y,
\dfrac{\widetilde{a_{30}}}{6}x^3
+\dfrac{\widetilde{a_{40}}}{24} x^4
+\dfrac{\widetilde{a_{31}}}{6 \eta}x^3y
+\dfrac{\widetilde{a_{22}}}{4 \eta^2}x^2y^2
+\dfrac{\widetilde{a_{13}}}{6 \eta^3}xy^3\right.
\label{eq:j5phigoo}\\
&
\left.+\dfrac{\widetilde{a_{50}}}{120}x^5
+\dfrac{\widetilde{a_{41}}}{24 \eta}x^4y
+\dfrac{\widetilde{a_{32}}}{12 \eta^2}x^3y^2
+\dfrac{\widetilde{a_{23}}}{12 \eta^3}x^2y^3
+\dfrac{\widetilde{a_{14}}}{24 \eta^4}xy^4+h(y)\right),
\nonumber
\end{align}
where $h(y)$ is a function of a variable $y$, and
\begin{align}
\widetilde{a_{30}}=&\eta^2 s,\label{eq:gooat30}\\
\widetilde{a_{22}}=&a_{40} \xi^2+2 a_{31} \eta \xi+a_{22} \eta^2,
\label{eq:gooat22}\\
\widetilde{a_{13}}=&a_{40} \xi^3+3 a_{31} \eta \xi^2
+3 a_{22} \eta^2 \xi+a_{13} \eta^3,\label{eq:gooat13}\\
\widetilde{a_{14}}=&
a_{50} \xi^4+4 a_{41} \eta \xi^3+6 a_{32} \eta^2 \xi^2
+4 a_{23} \eta^3 \xi+a_{14} \eta^4.\label{eq:gooat14}
\end{align}
We remark that the terms $\widetilde{a_{ij}}$ $(ij=31,13,50,41,32,23)$ will
not be used in the later calculations.
By \eqref{eq:vector} and \eqref{eq:gooat13}, 
we see $\widetilde{a_{13}}\ne0$ is equivalent to 
$Q_{E_7}\ne0$. 
Since a goose is $4$-determined,
this shows the assertion \eqref{it:goose}.
If $\widetilde{a_{13}}=0$,
then since $\eta\ne0$, we have
$
a_{13}=-(a_{40}\xi^3+3a_{31}\xi^2\eta+3a_{22}\xi\eta^2)/\eta^3
$.
Substituting this into
\eqref{eq:gooat30}, \eqref{eq:gooat22}, \eqref{eq:gooat14},
and by \eqref{eq:vector},
we see that $3\widetilde{a_{22}}^2-\widetilde{a_{30}}\widetilde{a_{14}}$
is a positive multiplication of 
$Q_{E_*}$.
By (1) of Lemma \ref{lem:ellinochange}, 
we have the assertion \eqref{it:uggoose}.
\end{proof}
\begin{proof}[Proof of Proposition {\rm \ref{prop:projectiona30}}]
We assume $a_{21}a_{40}\ne0$.
Then replacing $x$ by $x-a_{12}/(2a_{21})y$,
we see
$$
j^3\phi=
\left(y,\frac12 a_{21} x^2 y+h(y)\right),
$$
where $h(y)$ is a function of a variable $y$.
Further, replacing $x$ by 
$
x + c_{20} x^2 + 2 c_{11} x y + c_{02} y^2
$,
where
\begin{align*}
c_{20}=&
\dfrac{-2 a_{21} a_{31}+a_{12} a_{40}}{12 a_{21}^2},\quad
c_{11}=
-\dfrac{4 a_{21}^2 a_{22}-4 a_{12} a_{21} a_{31}+a_{12}^2 a_{40}}{32 a_{21}^3},\\
c_{02}=&\dfrac{-8 a_{13} a_{21}^3
+a_{12} (12 a_{21}^2 a_{22}-6 a_{12} a_{21} a_{31}+a_{12}^2 a_{40})}{48 a_{21}^4},
\end{align*}
then we see
$$
j^7\phi=\left(
y,
\dfrac{\alpha_{21}}{2}x^2 y
+\dfrac{\alpha_{40}}{24}x^4
+
\sum_{i+j=5,6,7}\dfrac{\alpha_{ij}}{i!j!}x^iy^j
\right).
$$
Here, $\alpha_{21}=a_{21}$, $\alpha_{40}=a_{40}$ and
\begin{align}
\alpha_{50}=&
\dfrac{-10 a_{21} a_{31} a_{40} + 5 a_{12} a_{40}^2 + 3 a_{21}^2 a_{50}}
{3 a_{21}^2},\label{eq:gulalpha50}\\
\alpha_{32}=&
\dfrac{1}{24 a_{21}^4}
\Big(
7 a_{12}^3 a_{40}^2 - 42 a_{12}^2 a_{31} a_{40} a_{21} + 
 6 a_{12} (8 a_{31}^2 + 6 a_{22} a_{40} \label{eq:gulalpha32}\\
&+ a_{12} a_{50}) a_{21}^2 - 
 8 (6 a_{22} a_{31} + a_{13} a_{40} + 3 a_{12} a_{41}) a_{21}^3 + 
 24 a_{32} a_{21}^4 \Big),\nonumber\\
\alpha_{51}=&
\dfrac{1}{48 a_{21}^5}
\Big(-25 a_{12}^3 a_{40}^3 + 150 a_{12}^2 a_{31} a_{40}^2 a_{21} 
- 5 a_{12} a_{40} (48 a_{31}^2 \label{eq:gulalpha51}\\
&+ 12 a_{22} a_{40} + 11 a_{12} a_{50}) a_{21}^2 + 
 20 (4 a_{31}^3 + 6 a_{22} a_{31} a_{40} + 4 a_{12} a_{40} a_{41} 
\nonumber\\
&+ 7 a_{12} a_{31} a_{50}) a_{21}^3 - 
 4 (40 a_{31} a_{41} + 15 a_{22} a_{50} + 6 a_{12} a_{60}) a_{21}^4 + 
 48 a_{51} a_{21}^5)\Big)\nonumber\\
\alpha_{70}=&
\dfrac{1}{72 a_{21}^6}
\Big(35 a_{12}^3 a_{40}^4 - 210 a_{12}^2 a_{31} a_{40}^3 a_{21} + 
 210 a_{12} a_{40}^2 (2 a_{31}^2 \label{eq:gulalpha70}\\
&+ a_{12} a_{50}) a_{21}^2 - 
 280 a_{31} a_{40} (a_{31}^2 + 3 a_{12} a_{50})a_{21}^3 + 
 84 (10 a_{31}^2 a_{50} \nonumber\\
&+ 3 a_{12} a_{40} a_{60}) a_{21}^4 - 504 a_{31} a_{60} a_{21}^5 + 
 72 a_{70} a_{21}^6\Big),\nonumber
\end{align}
and the other coefficients will not be used in the later calculations.
The right-hand side of \eqref{eq:gulalpha50} is proportional to
$R_{A_4}$. By Lemma \ref{lem:ellinochange}, 
we have the assertion \eqref{itm:a3001}.
We next assume $a_{21}a_{40}\ne0$ and $\alpha_{50}=0$.
Then we have
$$
a_{12}=
\dfrac{a_{21}(10 a_{31}a_{40}-3a_{21}a_{50})}{5a_{40}^2}.
$$
Substituting this into
$A=35 \alpha_{32} \alpha_{40}^2 - 21 \alpha_{21} \alpha_{40} \alpha_{51} 
+ 3 \alpha_{21}^2 \alpha_{70}$,
we see $A=0$ is equivalent to $R_{A_6}=0$.
This with Lemma \ref{lem:ellinochange}
show the assertion \eqref{itm:a3002}.
We show the assertion \eqref{itm:a3003}.
As in the above, 
replacing $x$ by $x-a_{12}/(2a_{21})y$,
we see 
$j^3\phi= \left(y, a_{21} x^2 y/2+h(y)\right)$
($h(y)$ is a function).
Further, 
replacing $x$ by 
$$
x-\dfrac{1}{2}
\left(
 \dfrac{a_{31}}{3 a_{21}} x^2
+\dfrac{a_{21} a_{22}-a_{12} a_{31}}{2 a_{21}^2}x y
+\dfrac{4 a_{13} a_{21}^2-6 a_{12} a_{21} a_{22}+3 a_{12}^2 a_{31}}
{12 a_{21}^3}y^2\right),
$$
we see 
$j^6\phi=
\left(
y,
a_{21} x^2 y/2
+\sum_{i,j=5,6}\beta_{ij}x^iy^j/(i!j!)+
h(y)
\right)$,
where $h(y)$ is a function,
$\beta_{50}=a_{50}$ and
$\beta_{60}=(a_{21} a_{60}-5 a_{31} a_{50})/a_{21}$.
This with Lemma \ref{lem:ellinochange}
shows the assertion \eqref{itm:a3003}.

We finally show the assertion \eqref{itm:a3004}.
As in the above, 
replacing $x$ by $x- a_{31} y/a_{40}$
and taking a routine coordinate change of the target,
we see 
$$
j^5\phi=
\left(y,
\dfrac{\delta_{12}}{2} xy^2+
\sum_{i,j=4,5}\dfrac{\delta_{ij}}{i!j!}x^iy^j \right),
$$
where
$$
\delta_{12}=a_{12},\ 
\delta_{22}=-
\dfrac{a_{31}^2-a_{22} a_{40}}{a_{40}},\ 
\delta_{40}=a_{40},\ 
\delta_{31}=0,\ 
\delta_{50}=a_{50}.
$$
Since 
$-10 \delta_{22} \delta_{40} + \delta_{12} \delta_{50}=
10 a_{31}^2-10 a_{22} a_{40}+a_{12} a_{50}$, and
by Lemma \ref{lem:ellinochange},
we have the assertion.
\end{proof}
\begin{rem}
Writing
$$
f(x,y) = \dfrac{a_{02}}2y^2+\sum_{l=3}f_l(x,y),\ f_l(x,y)=\sum_{i+j=l}\dfrac{a_{ij}}{i!j!}x^iy^j,
$$
the coefficients of $\xi^3,\xi^2\eta,\xi\eta^2,\eta^3$ in
$\widetilde{a_{13}}/6\eta^3$
given in \eqref{eq:gooat13} are expressed by, respectively, 
$$
-\dfrac1{6s\eta^2}\dfrac{\partial^4 f_4}{\partial x^4}(\xi,\eta),\ - \dfrac1{3s\eta^3}\dfrac{\partial^3 f_4}{\partial x^3}(\xi,\eta),\ - \dfrac1{2s\eta^4}\dfrac{\partial^2 f_4}{\partial x^2}(\xi,\eta),\ \dfrac1{\eta^3}\dfrac{\partial f_4}{\partial x}(\xi,\eta),
$$
where $(\xi,\eta)$ is a non-zero vector in \eqref{eq:vector}. Moreover, $Q_{E_7}$ can be expressed by using $(\xi,\eta)$. Theses facts imply 
that the non-zero vector $(\xi,\eta)$ plays an important role 
in the criterion of the goose. 

The condition \eqref{eq:vector} is equivalent to that there is a non-zero vector $(\tilde{\xi},\tilde{\eta})$ such that 
\begin{equation}
\label{eq:kernel}
M_3\begin{pmatrix}
\tilde{\xi} \\ \tilde{\eta}
\end{pmatrix}=\begin{pmatrix}
a_{30} & a_{21} \\ a_{21} & a_{12}
\end{pmatrix}\begin{pmatrix}
\tilde{\xi} \\ \tilde{\eta}
\end{pmatrix}=
\begin{pmatrix}
0 \\ 0 
\end{pmatrix}.
\end{equation}
The matrix $M_3$ is the expressed matrix of $(f_3)_x$, that is, 
$$
(f_3)_x = \dfrac12\begin{pmatrix}
x & y 
\end{pmatrix}
\begin{pmatrix}
a_{30} & a_{21} \\ a_{21} & a_{12}
\end{pmatrix}
\begin{pmatrix}
x \\ y
\end{pmatrix}.
$$
If follows from \eqref{eq:kernel}, that $(\tilde{\xi}, \tilde{\eta})$ 
is a kernel vector of $M_3$, and the kernel 
is generated by $(a_{21}, -a_{30})$.
We set $\tilde{K}$ by 
$$
\tilde{K} = f_{xx} f_{yy} - (f_{xy})^2 = a_{02}(a_{30}x+a_{21}y)+O(2). 
$$
The parabolic set of the surface $S$ is the zero set of $\tilde{K}$, 
and the gradient vector of $\tilde{K}$ is parallel to $(a_{30},a_{21})$
at the origin.
Hence, the kernel of $M_3$ is tangent to the parabolic set at the origin.  
\end{rem}

Here we summarize the above results.
We compare two different sorts of stratifications
given in 
Theorem \ref{thm:main}, Proposition \ref{prop:projection} and Proposition \ref{prop:projectiona30}, respectively.
We have the following:
\begin{thm}\label{characterizationthm}
A parabolic surface-germ $S$ of the form $($\ref{monge}\,$)$ belongs to one of the stratum of No. (i)--(vi) in Table \ref{cor-comp-all}
if and only if
the orthogonal projection of $S$ along the asymptotic direction has an $\A$-singularity
in the corresponding item in the ``$\A$-sing. of proj." column of Table \ref{cor-comp-all} at $0$.
\end{thm}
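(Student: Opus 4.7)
The plan is to verify Table~\ref{cor-comp-all} row-by-row by matching the algebraic conditions defining each stratum of Table~\ref{mainthmtable1} (from Theorem~\ref{thm:main}) against the corresponding conditions in Propositions~\ref{prop:projection} and~\ref{prop:projectiona30}. The crucial observation is that the two stratifications are already expressed in terms of the \emph{same} polynomial invariants: $Q_{D_4}, Q_{E_7}, Q_{E_*}$ when $a_{30}\ne 0$, and $R_{A_4}, R_{A_6}$ together with $a_{21}, a_{40}, a_{50}, \ldots$ when $a_{30}=0$. The proof should therefore reduce to a direct bookkeeping comparison; no fresh local computation is needed.

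I would split into the two regimes. In the case $a_{30}\ne 0$, I match stratum (i) (defined by $Q_{D_4}>0$) with beaks and stratum (ii) ($Q_{D_4}<0$) with lips via Proposition~\ref{prop:projection}(\ref{it:bl}); stratum (iii) ($Q_{D_4}=0,\ Q_{E_7}\ne 0$) with goose via (\ref{it:goose}); and stratum (v) ($Q_{D_4}=Q_{E_7}=0,\ Q_{E_*}\ne 0$) with $\pm$-ugly goose via (\ref{it:uggoose}). The sign correspondence for $\pm$-ugly goose follows since, as already shown inside the proof of Proposition~\ref{prop:projection}(\ref{it:uggoose}), the quantity $3\widetilde{a_{22}}^2-\widetilde{a_{30}}\widetilde{a_{14}}$ governing the sign is a positive multiple of $Q_{E_*}$.

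In the case $a_{30}=0$, I match stratum (iv) (with $a_{21}\ne 0,\ R_{A_4}\ne 0$, and the implicit $a_{40}\ne 0$ of Lemma~\ref{lem:strata6_7-1}) with gulls via Proposition~\ref{prop:projectiona30}(\ref{itm:a3001}); stratum (vi) (with $a_{21}a_{40}\ne 0,\ R_{A_4}=0,\ R_{A_6}\ne 0$) with ugly gulls via (\ref{itm:a3002}); stratum (vii) (with $a_{21}\ne 0,\ a_{40}=0$) with the $12$-singularity via (\ref{itm:a3003}); and stratum (viii) (with $a_{21}=0,\ a_{12}a_{40}\ne 0$) with the $16^\pm$-singularity via (\ref{itm:a3004}), the sign being decided by $10a_{31}^2-10a_{22}a_{40}+a_{12}a_{50}$.

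The main obstacle I anticipate is the bookkeeping of genericity conditions. Table~\ref{mainthmtable1} lists only the minimal open conditions needed to produce a given $\K$-contact type of $F$, whereas the two propositions are biconditional and sometimes demand additional open conditions, such as $a_{50}(a_{21}a_{60}-5a_{31}a_{50})\ne 0$ for the $12$-singularity or the nonvanishing of $10a_{31}^2-10a_{22}a_{40}+a_{12}a_{50}$ for the $16^\pm$-singularity. For each row of Table~\ref{cor-comp-all} I would have to confirm that these supplementary open conditions are either implied by the stratum's defining data or explicitly baked into its definition; similarly, I would need to cross-check that the sign conventions on the $D_4^\pm$ contact side agree with those of $\pm$-ugly goose and $16^\pm$ on the projection side. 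Once this matching is organized carefully into a single table, each equivalence in Theorem~\ref{characterizationthm} will be an immediate reading off.
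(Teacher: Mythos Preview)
Your approach is correct and coincides with the paper's: Theorem~\ref{characterizationthm} is presented there simply as a summary of Theorem~\ref{thm:main} together with Propositions~\ref{prop:projection} and~\ref{prop:projectiona30}, with no further argument, so the row-by-row comparison of conditions you outline is exactly what is intended.

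One clarification will save you work. The theorem only asserts the biconditional for rows (i)--(vi); rows (vii) and (viii) are deliberately excluded, and the paper remarks immediately afterward that for those strata the $4$-jet of $\phi$ is only $\A^4$-equivalent to $(x,xy^2)$ or $(x,x^2y+y^4)$, so the $\A$-type is not determined by the stratum data alone. Thus the ``obstacle'' you anticipate---the extra open conditions $a_{50}(a_{21}a_{60}-5a_{31}a_{50})\ne0$ for the $12$-singularity and $10a_{31}^2-10a_{22}a_{40}+a_{12}a_{50}\ne0$ for $16^\pm$---is precisely why (vii) and (viii) carry the qualifier ``etc.'' in Table~\ref{cor-comp-all} and are omitted from the theorem. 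You need not (and cannot) match them biconditionally. For (i)--(vi) the defining conditions in Table~\ref{mainthmtable1} coincide verbatim with the if-and-only-if criteria of the two propositions (reading, as you note, the $a_{40}\ne0$ in row (iv) as inherited from Lemma~\ref{lem:strata6_7-1}), so no supplementary genericity check is required there.
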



\begin{table}[htbp]
\centering
{
\begin{tabular}{l| l | l |c}
\hline
No. & Name & $\A$-sing. of proj. & Cod.\\
\hline
$\text{}^{\rule{0pt}{8pt}}$ 
(i)  & $(A_2,D_4|D_{\ge 5})$ &\mbox{beaks}  & 1 \\[2pt]
(ii) & $(A_2,D_4^{+})$   &\mbox{lips} & 1 \\[2pt]
(iii) &$(A_2, D_4^{+}|E_{6}|E_{7})$   & \mbox{goose} & 2 \\[2pt]
(iv) &$(A_3|A_4, D_4|D_{\ge5})$  & \mbox{gulls} & 2 \\[2pt]
(v) &$(A_2, D_4^{+}|E_{6}|E_{8}|E_*)$ & \mbox{ugly goose} & 3 \\[2pt]
(vi) &$(A_3|A_5|A_6, D_4|D_{\ge5})$ & \mbox{ugly gulls} & 3 \\[2pt]
(vii) &$(A_3^-, D_4|D_{\ge5})$ & 12-\mbox{singularity, etc.} & 3 \\[2pt]
(viii) &$(A_3, D_{5})$ & 16-\mbox{singularity, etc.} & 3 \\\hline
\end{tabular}
}
\caption{Corresponding $\A$-singularities of the orthogonal projections of parabolic surface-germs
to strata given in Table \ref{mainthmtable1}.}
\label{cor-comp-all}
\end{table}

\begin{rem}\label{genericrem}
Based on the result of transversality theorem \cite[Theorem 6.5., Definition 6.4.]{IRFT},
a surface $S\subset \R^3$ is called {\it projection generic} if any orthogonal projection of $S$ at any point $p\in S$ has an $\A$-singularity of $\A$-codimension $\le 4$.
Thus Theorem \ref{characterizationthm} gives a complete geometric characterization to the $\A$-singularities of the orthogonal projections of projection generic surfaces at parabolic points from the viewpoint of contact cylindrical surfaces ($\K$-singularities of the contact functions).
Since the singular value sets of the $\A$-singularities appearing in the orthogonal projections of projection generic surfaces are not diffeomorphic to each other,
our result implies that the information of the apparent contour could recover
the information of the contact cylindrical surface.
\end{rem}

We remark that
for a stratum of no.~(vii) (resp.~(viii)),
the $4$-jet of the orthogonal projection along the asymptotic direction is $\A^4$-equivalent to $(x,xy^2)$ (resp. $(x,x^2y+y^4)$),
whose $\A$-singularity is not uniquely determined.

\section*{Acknowledgments}
The authors thank Toshizumi Fukui and Farid Tari for helpful discussions.
This work is partially supported by JSPS KAKENHI Grant Numbers 21K03230, 20K14312,
18K03301,
Japan-Brazil bilateral project JPJSBP1 20190103
and Japan-Russia Research Cooperative Program 120194801.


\addresslist
\end{document}